\newtheorem{thm}{Theorem}[section]
\newtheorem{dfn}{Definition}[section]
\newtheorem{prop}{Proposition}[section]
\newtheorem{cor}{Corollary}[section]
\newtheorem{theorem}{Theorem}
\newcommand\M{\mathcal{M}}
\newcommand\teich{\tilde{\mathcal{H}}(\kappa)}
\newcommand\moduli{\mathcal{H}(\kappa)}
\newcommand\GL{GL_2^{+}(\mathbb{R})}
\author{Florent Ygouf}
\address{School of Mathematical Sciences, Tel Aviv University}
\email{florentygouf@mail.tau.ac.il}
\title{Nonarithmetic affine invariant orbifolds in $\mathcal{H}^{\MakeLowercase{odd}}(2,2)$ and $\mathcal{H}(3,1)$}
\begin{document}

\parindent=0em

\maketitle

\begin{abstract}
We classify the nonarithmetic affine invariant orbifolds of complex dimension 3 in $\mathcal{H}^{odd}(2,2)$ and $\mathcal{H}(3,1)$. 
\end{abstract}

\section{Introduction}

\subsection{Context and results}

Let $g\geq 1, n \geq 1$ and let $\kappa=(k_1,\cdots,k_n)$ be an integer partition of $2g-2$. We denote by $\moduli$ the stratum of the moduli space of translation surfaces of type $\kappa$, \textit{i.e} the space of isomorphism classes of triples $q = (X_q,\omega_q,z_q)$ where $X_q$ is a genus $g$ Riemann surface, $\omega_q$ is non-zero holomorphic $1$-form on $X_q$ and $z_q : \{1,\cdots,n\} \to \Sigma_q$ is a bijection where $\Sigma_q$ is the set of zeroes of $\omega_q$ and is such that $z_q(i)$ has order $k_i$. We emphasize that isomorphisms of translation surfaces are required to respect the labeling of the zeroes. The space $\moduli$ is endowed with a natural action of $\GL$ which is a generalization of the action of $\GL$ on the space of flat tori $\GL / SL_2(\mathbb{Z})$. This action preserves the stratification of the moduli space induced by the combinatorics of the singularities, and the classification of the closed invariant sets of the strata is a central problem in Teichmüller dynamics. Such a classification has been initiated in genus $2$ by Calta in \cite{calta2004veech} and by McMullen in \cite{mcmullen2007dynamics}, \cite{mcmullen2006teichmuller} and \cite{mcmullen2005teichmuller}. In particular, the latter proved that the orbit of a genus 2 translation surface $q$ is either closed, dense in a locus of surfaces whose jacobian have a special property called real multiplication by a quadratic order, with $\omega_q$ being an eigenform or dense in its stratum and he classified the genus 2 primitive Veech surfaces. McMullen also discovered in higher genera an infinite family of non-trivial closed $\GL$-invariant sets $\Omega E_D$, parametrized by their discriminant $D$. These loci, called the Prym eigenform loci, will play a central role in the remainder of this text. The question of the classification of arithmetic (thus imprimitive) Veech surfaces in genus 2 has been addressed by Hubert and Lelièvre in \cite{hubert2004square} and later by Duryev in \cite{duryev2018teichmuller} but is still incomplete. Hubert, Lanneau and Möller have computed the orbit closure of many hyperelliptic surfaces in $\mathcal{H}(2,2)^{odd}$ in \cite{hubert2012completely}. Since, much effort has been made toward a classification in higher genera. In a celebrated result, Eskin, Mirzakhani and Mohammadi have proved a structural result on the closed invariant sets: they are immersed orbifolds cut out by linear equations with real coefficients. See Definition \ref{affinemanifold}. Such objects are called affine invariant orbifolds. Wright proved that the coefficients of the equations defining these orbifold belong to a number field $k(\M)$ whose degree over $\mathbb{Q}$ is bounded above by the genus. See \cite{wright2014field}. This number field will be referred to as the field of definition. Wright also introduced an important numerical invariant called the rank $rk(\M)$. This is a modified version of the dimension that measures the size of affine invariant orbifolds up to isoperiodic deformations and is related to cylinder deformations. See \cite{wright2015cylinder} for more details. Mirzakhani conjectured that arithmetic affine invariant orbifolds whose rank is bigger than 2 should arise from covering constructions over quadratic differentials. Arithmetic means here that the field of definition is $\mathbb{Q}$. This conjecture is now known to be false due to the work of Eskin,  McMullen, Mukamel and Wright in \cite{eskin2018billiards} but counterexamples are expected to be rare. Mirzakhani and Wright proved in \cite{mirzakhani2018full} that the only affine invariant orbifolds of maximal rank are the strata themselves and the hyperelliptic locus of those strata. Then, Apisa proved in \cite{apisa2017rank} and \cite{apisa2018gl} that the orbits of translation surfaces in the hyperelliptic strata are either closed, dense, or contained in loci of branched covers. In genus 3, Aulicino, Nguyen and Wright have classified the rank 2 affine invariant orbifolds. See \cite{nguyen2014non} and \cite{aulicino2016classification} for the classification in $\mathcal{H}^{odd}(4)$ and $\mathcal{H}^{hyp}(4)$ and see \cite{aulicino2016rank} and \cite{aulicino2016rank1} for the classification in the remaining strata of genus 3. In this paper, we pursue the classification in genus 3:

\begin{theorem}\label{T1}
Let $\mathcal{M}$ be a rel-invariant rank one affine invariant orbifold in $\mathcal{H}^{odd}(2,2)$. If $\M$ is nonarithmetic, then there exists $D$ not a square such that $\M$ is a connected component of $\Omega E_D^{odd}(2,2)$.  
\end{theorem}

Here, an affine invariant orbifold is said to be rel-invariant if it is saturated by the isoperiodic foliation (see Section 3.1 for more details). In strata with two zeroes, a rank one affine invariant orbifold is rel-invariant if, and only if, its complex dimension is $3$.  Nonarithmetic means that $k(\M)$ is strictly bigger than $\mathbb{Q}$. The sets $\Omega E_D^{odd}(2,2)$ are the intersection of the Prym eigenform loci of McMullen with the connected component $\mathcal{H}^{odd}(2,2)$. The connected components of $\Omega E_D^{odd}(2,2)$ have been classified by Lanneau and Nguyen in \cite{lanneau2014connected}. In particular, this gives an alternative proof in this setting of the fact that $k(\M)$ is totally real extension of $\mathbb{Q}$, a result otherwise due to Filip (Theorem 1.6 in \cite{filip2016semisimplicity}). In the stratum $\mathcal{H}(3,1)$, the situation is different and we prove : 

\begin{theorem}\label{T2}
Let $\M$ be a rel-invariant rank one affine invariant orbifold in $\mathcal{H}(3,1)$. Then $\M$ is arithmetic. 
\end{theorem}

\subsection{Outline of the proof}

The techniques we use are greatly inspired by the work of Apisa in \cite{apisa2017rank} and rely on cylinder deformations, a technique initiated by Wright in \cite{wright2015cylinder}. In $\mathcal{H}^{odd}(2,2)$, we introduce the notion of $\M$-entangled cylinders (see Definition \ref{entangled}) that is reminiscent of the sub-equivalent cylinders in \cite{apisa2017rank}. This notion is used to detect isometric cylinders and ultimately to produce Prym involution (see Section 2). Finally, we use a criterion to recognize Prym eigenforms due to McMullen to conclude the proof of Theorem A. Like in \cite{apisa2017rank}, an important idea of the proof is the use of vertical cylinders to control the length of horizontal saddle connections. In $\mathcal{H}(3,1)$, we use the presence of absolute cylinders and the restriction they induce on the circumferences of relative cylinders (see Proposition \ref{notmixed} for more details) to contradict nonarithmeticity, based of the formula for the field of definition of affine invariant orbifolds given by Wright, see Proposition \ref{field}.

\subsection{Organisation of the paper}

We start by recalling basic definitions for the moduli space of translation surfaces in Section \ref{framework}. In Section \ref{toolkit} we collect important results about the geometry of cylinders we will use in the course of our proofs. Section \ref{H22} is dedicated to the proof of Theorem \ref{T1}. We also deduce an interesting result about the isoperiodic leaf of nonarithmetic Veech surfaces in $\mathcal{H}^{odd}(2,2)$. Section \ref{H31} is dedicated to the proof Theorem \ref{T2}  and we prove the counterpart of the result on the isoperiodic leaf of nonarithmetic Veech surfaces in $\mathcal{H}(3,1)$. 
\subsection{Acknowledgements}

I am greatly indebted to Erwan Lanneau for his encouragements and the countless insightful comments he made during the preparation of this work. I also want to thank Paul Apisa, David Aulicino, Alex Wright and Duc-Manh Nguyen for having discussed those problems with me, this text has greatly benefited from their suggestions and remarks.  This work has been partially supported by the LabEx PERSYVAL-Lab
(ANR-11-LABX-0025-01) funded by the French program Investissement d’avenir. I am also particularly grateful to anonymous referees who made this document better in many ways. 

\section{Framework}\label{framework}

Let $S$ be a genus $g$ surface and let $\Sigma = \{z_1, \cdots, z_n\} \subset S$. The stratum $\tilde{\mathcal{H}}(\kappa)$ of the space of marked translation surfaces is the set of isomorphism classes of pairs $(q,f)$ where $q = (X_q,\omega_q,z_q)$ is a translation surface of type $\kappa$ and $f: S \to X_q$ is a homeomorphism such that for any $i \in  \{1,\cdots,n\}$ we have $f(z_i) = z_q(i)$. The following map is known as the period map : 

$$
\Phi : 
\begin{matrix}
\tilde{\mathcal{H}}(\kappa) &\to & H^1(S,\Sigma,\mathbb{C}) \\
(q,f) &\mapsto & (\gamma \mapsto \int_{f\circ \gamma} \omega_q)
\end{matrix}
$$

There is a complex structure on $\tilde{\mathcal{H}}(\kappa)$ that turns $\Phi$ into a local biholomorphism and if $MCG(S,\Sigma)$ denotes the relative mapping class group of $S$ that fixes $\Sigma$ pointwise, then $MCG(S,\Sigma)$ acts properly discontinuously on the right of $\teich$ by precomposition: $(q,f) \cdot \varphi = (q, f \circ \varphi)$. The quotient set is isomorphic to $\moduli$ and the latter is endowed with the complex orbifold structure that turns the canonical projection $\pi : \tilde{\mathcal{H}}(\kappa) \to \mathcal{H}(\kappa)$ into a local biholomorphism (in the orbifold sense, see \cite{caramello2019introduction} for relevant definitions). The space $\tilde{\mathcal{H}}(\kappa)$ is endowed with a group action by $GL_2^{+}(\mathbb{R})$ defined by : 
$$
\forall g \in \GL \ \Phi(g\cdot (q,f)) = g \cdot \Phi (q,f)
$$

That action descends to an action on $\moduli$ in a way that the canonical projection $\pi$ is $\GL$-equivariant. The action of the subgroup of diagonal matrices with determinant $1$ is known as the Teichmüller geodesic flow, while the action of the subgroup of upper triangular matrices with only 1 on the diagonal is known as the horocycle flow. More details on the structures of theses spaces can be found in \cite{zorich2006flat} or \cite{forni2013introduction}. 

\begin{dfn}\label{affinemanifold}
An affine invariant orbifold is a properly immersed closed and connected sub-orbifold which, away from self-intersection points, is cut out by linear equations in period coordinates. 
\end{dfn}

\textit{Remark.} In the previous definition, immersion refers to an immersion of orbifold, see \cite{caramello2019introduction} for relevant definitions. Most of the time, we identify an affine invariant orbifold $\M$ with its image in $\moduli$.

\medskip 

Affine invariant orbifolds are invariant under the action of $\GL$ and Eskin, Mirzakhani and Mohammadi proved that any $\GL$-orbit closure is itself an affine invariant orbifold.  See \cite{eskin2015isolation}. An important numerical invariant associated to these loci is the rank, defined as follows: define $\rho :  H^1(S,\Sigma,\mathbb{C}) \to H^1(S,\mathbb{C})$ to be the canonical restriction map. By definition, the tangent space at a point $q \in \M$ is identified to a linear subspace $V \subset H^1(S,\Sigma,\mathbb{C})$. Avila, Eskin and Möller proved in \cite{avila2017symplectic} that $\rho(V)$ is a symplectic subspace of $H^1(S,\mathbb{C})$. The rank of $\M$, denoted $rk(\M)$, is then defined as half the dimension of this space. Notice that this dimension is constant by connectedness. More details can be found in \cite{wright2015cylinder}. The following definition, also due to Wright, will be important for the remainder of this text :

\begin{dfn}
Let $\M$ be a affine invariant orbifold. The field of definition of $\M$ is the smallest subfield $k(\M)$ whose coefficients can be used to define $\M$ by linear equations in period coordinates. 
\end{dfn}

A particularly interesting family of rank 1 affine invariant orbifolds has been discovered by McMullen in \cite{mcmullen2007prym}. We recall here the definition. Let $q = (X,\omega)$ be a translation surface endowed with a holomorphic involution $\lambda$. We denote by $\Omega(X)$ the set of holomorphic $1$-forms and by $\Omega^-(X)$ the set of $\lambda$-anti-invariant holomorphic $1$-forms. We say that $q$ is a Prym form if $\omega \in \Omega^-(X)$, that is $\lambda^{\ast} \omega = - \omega$, and $dim \Omega^-(X) = 2$. The Prym variety $Prym(q,\lambda)$ is defined as the 2-dimensional abelian variety $(\Omega^-(X))^{\ast} / H_1^-(X,\mathbb{Z})$ endowed with the polarization coming from the intersection form on $H_1(X,\mathbb{Z})$. Finally, let $D$ be a positive integer congruent to 0 or 1 mod 4 and let $\mathcal{O}_D \simeq \mathbb{Z}[X]/(X^2+bX+c)$ be the real quadratic order of discriminant $D = b^2-4c$. 

\begin{dfn}
A Prym eigenform is a Prym form $q = (X,\omega)$ corresponding to an involution $\lambda$ such that there is an injective ring morphism $\mathfrak{i} : \mathcal{O}_D \to End(Prym(q,\lambda))$ satisfying the following properties:

\begin{enumerate}
\item $\mathfrak{i}(\mathcal{O}_D)$ is a proper subring comprised of self adjoint endomorphisms
\item $\omega$ is an eigenvector for the action of $\mathcal{O}_D$ on $\Omega(X)^{-}$
\end{enumerate}
\end{dfn}

We will denote by $\Omega E_D(\kappa)$ the set of Prym eigenforms in the stratum $\moduli$. For more details, see \cite{mcmullen2007prym}. The following is due to McMullen (Theorem 3.2 in \cite{mcmullen2007prym})

\begin{thm}
Any connected component of $\Omega E_D(\kappa)$ is a rank 1 affine invariant orbifold.
\end{thm}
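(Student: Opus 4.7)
The plan is to exhibit, at any base point of a connected component $\M$ of $\Omega E_D(\kappa)$, an explicit $\mathbb{R}$-linear subspace $V \subset H^1(S,\Sigma,\mathbb{R})$ such that $\M$ coincides locally with $\Phi^{-1}(V \otimes_{\mathbb{R}} \mathbb{C})$ intersected with the stratum, and then to verify that the image $\rho(V)$ has real dimension $2$, giving rank one. To construct $V$, fix $(X_0,\omega_0,f_0) \in \M$ with Prym involution $\tau_0$ and embedding $\mathfrak{i} : \mathcal{O}_D \hookrightarrow \mathrm{End}(\mathrm{Prym}(X_0,\tau_0))$. Transporting $\tau_0$ by the marking $f_0$ produces a topological involution $\tilde\tau$ of $(S,\Sigma)$, whence an anti-invariant subspace $H^1(S,\Sigma,\mathbb{R})^-$ under $\tilde\tau^*$. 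The embedding $\mathfrak{i}$ induces a self-adjoint $\mathbb{Q}$-linear action of $\mathcal{O}_D$ on the real $4$-dimensional space $H^1(S,\mathbb{R})^-$, diagonalizable over $\mathbb{Q}(\sqrt D)$ into two real planes; let $L$ be the one containing $[\omega_0]$, and set
\[
V := \rho^{-1}(L) \cap H^1(S,\Sigma,\mathbb{R})^-.
\]

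Near $(X_0,\omega_0,f_0)$ in $\teich$ I would then show that $(X,\omega,f)$ lies over $\M$ iff $\Phi(X,\omega,f) \in V \otimes \mathbb{C}$. The $\tilde\tau$-anti-invariance of periods, combined with the fact that $\tilde\tau$ is already isotopic to a holomorphic involution at $X_0$, yields by a standard rigidity argument a holomorphic involution $\tau$ on $X$ (isotopic to $\tilde\tau$ via $f$) satisfying $\tau^*\omega = -\omega$, so $(X,\omega)$ is a Prym form. The eigenspace condition $\rho \circ \Phi(X,\omega,f) \in L \otimes \mathbb{C}$ then forces the integer endomorphism of $H^1(S,\mathbb{Z})^-$ representing $\mathfrak{i}(\sqrt D)$ to preserve the Hodge decomposition on $H^1(X,\mathbb{C})^-$, because $\omega$ and its Galois-conjugate eigenvector together span $\Omega^-(X)$. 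The resulting holomorphic self-adjoint endomorphism of $\mathrm{Prym}(X,\tau)$ makes $(X,\omega)$ into a Prym eigenform for $\mathcal{O}_D$, with $\omega$ as eigenvector. The converse inclusion is immediate from the definition of Prym eigenform.

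For the rank, by construction $\rho(V) \subseteq L$; averaging any lift against $\tilde\tau^*$ shows that $\rho : H^1(S,\Sigma,\mathbb{R})^- \to H^1(S,\mathbb{R})^-$ is surjective, so in fact $\rho(V) = L$. Hence $\dim_{\mathbb{R}} \rho(V) = 2$ and $rk(\M) = 1$. The subtlest step is the Hodge-theoretic one in the local model: upgrading the linear condition $[\omega] \in L \otimes \mathbb{C}$ on a deformed surface into the existence of a genuine holomorphic self-adjoint endomorphism of $\mathrm{Prym}(X,\tau)$ realising $\mathcal{O}_D$. This is exactly the content of McMullen's construction in \cite{mcmullen2007prym}; in practice one would invoke his eigenform criterion directly rather than reprove that the cohomological eigenspace condition pins down the correct sub-variation of Hodge structure.
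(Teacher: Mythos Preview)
The paper does not give a proof of this theorem; it is stated as a result due to McMullen and attributed to \cite{mcmullen2007prym}. Your sketch is essentially McMullen's original argument and is correct in outline: construct the linear local model as the preimage, inside $\tilde\tau$-anti-invariant relative cohomology, of the real-multiplication eigenspace $L \subset H^1(S,\mathbb{R})^-$, and read off rank one from $\rho(V)=L$ being two-dimensional. You rightly flag the Hodge-theoretic step---upgrading the cohomological eigenspace condition on a nearby surface to an honest self-adjoint endomorphism of its Prym variety---as the substantive content, and it is appropriate to defer to McMullen's eigenform criterion there rather than reprove it. One small caveat: your phrasing ``diagonalizable over $\mathbb{Q}(\sqrt D)$ into two real planes'' tacitly assumes $D$ is not a perfect square; when $D$ is a square the order $\mathcal{O}_D$ is not a domain and the eigenspace picture degenerates, though the conclusion still holds by a parallel argument.
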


When $\kappa = (2,2)^{odd}$ the loci $\Omega E_D(\kappa)$ are rel-invariant and Theorem A states that any nonarithmetic rank 1 and rel-invariant affine invariant orbifold is of that form. Finally, note that $\Omega E_D(3,1)$ is empty. Indeed if $q= (X,\omega)$ is Prym form in $\mathcal{H}(3,1)$ corresponding to a involution $\lambda$, then $\omega^2$ is invariant by $\lambda$ and thus descends to a quadratic form  on the torus whose singularities counted with multiplicity do not add up to $0$, as they should according to Riemann-Roch.

\section{Preparation of a toolkit}\label{toolkit}

In this section, we collect the tools we will use in the proof of Theorems A and B. 
 
\subsection{Isoperiodic foliation and Rel flow}

The stratum $\moduli$ is endowed with a foliation $\mathcal{F}$ that we recall. Since $\Phi$ is a local biholomorphism and $\rho$ is a surjective linear map, the composition $\rho \circ \Phi$ is a submersion. The connected components of its level sets induce a foliation of $\teich$ whose plaques are modeled on $ker \rho$. The group $MCG(S,\Sigma)$ acts on both $\teich$ by precomposition on the marking and on $H^1(S,\mathbb{C})$ by the Torelli representation. With respect to these actions, the map $\rho \circ \Phi$ is $MCG(S,\Sigma)$-equivariant. This means that the aforementioned foliation on $\teich$ descends to a foliation on $\moduli$. This foliation is usually referred to as the isoperiodic foliation, the kernel foliation or the Rel foliation. Intuitively, moving along the leaves of the isoperiodic foliations amounts to changing the relative periods without changing the absolute ones. In the same fashion, one can also define vector fields that are tangent to the leaves of the isoperiodic foliations. Any $v \in ker \rho$ induces by pull back by $\Phi$ a constant vector field on $\teich$. This vector field is invariant by the action of $MCG(S,\Sigma)$ (because we required that labeling of zeroes is respected by $MCG(S,\Sigma)$) and thus descends to a vector field on $\moduli$. We denote by $Rel_v^t$ the flow of that vector field. The latter is known as the rel flow associated to $v$. If $v \in H^1(S,\Sigma,\mathbb{R})$ (resp. $H^1(S,\Sigma,i\cdot \mathbb{R})$ ) we say that $Rel_v^t$ is a real rel flow (resp. imaginary rel flow). It is known that the presence of horizontal (resp. vertical) saddle connections connecting different zeroes is the only obstruction for real (resp imaginary) rel flows to be defined for all time, see Section 4 of \cite{bainbridge2016horocycle} for more details. 

\begin{dfn}
Let $\M \subset \moduli$ be an affine invariant orbifold. We will say that $\M$ is rel-invariant if it is saturated by the isoperiodic foliation. Equivalently, this means that for any $q \in \M$, any $v \in ker\rho$ and any $t>0$ such that $Rel_v^t(q)$ is defined, we have $Rel_v^t(q) \in \M$.
\end{dfn}

As mentioned in the introduction, the space $\Omega E_D^{odd}(2,2)$ is rel-invariant. Closed $\GL$-orbits and hyperelliptic loci in nonhyperelliptic strata are never Rel-invariant. More sophisticated examples of $\M$ that are not rel-invariant are provided in \cite{eskin2018billiards}. 

\subsection{Modifying the twist parameters}

Let $q$ be a translation surface in $\moduli$ that has a decomposition into $m$ horizontal cylinders, which we denote by $\mathcal{C}_1, \cdots, \mathcal{C}_m$. We also denote by $c_i$ the circumference of the cylinder $\mathcal{C}_i$ and by $h_i$ its height. One can define a smooth map (in the orbifold category) $\tau$ from $\mathbb{R}^m$ to $\moduli$ such that $\tau(0) = q$ and $\tau(x_1,\cdots,x_m)$ is the surface obtained from $q$ by shearing the cylinder $\mathcal{C}_i$ by $x_ic_i$. The action of $\mathbb{Z}^m$ on $\mathbb{R}^m$ by translation is intertwined by $\tau$ with the action of the subgroup of $MCG(S,\Sigma)$ spanned by the Dehn twists about the core curves of the horizontal cylinders. There is also a finite subgroup $\Delta$, perhaps trivial, of automorphisms of $q$ that acts on $q$ by permuting cylinders and on the $m$-dimensional torus $\mathbb{T}^m$ by permuting the coordinates. Consequently, there is an induced map $\tau : \mathbb{T}^m / \Delta \to \moduli$ and it is a diffeomorphism (in the orbifold category) onto its image. See Figure \ref{twistmap}. We have the following formula for the image of a vector $(u_1,\cdots,u_m) \in \mathbb{R}^m$ by the derivative of $\tau$ at a point $x \in \mathbb{R}^m$: 

\begin{equation}\label{derivative}
    D_x \tau(u_1,\cdots,u_m) = \sum_{i=1}^m c_iu_i I(\gamma_i, \cdot)
\end{equation}

where $I : H_1(S-\Sigma,\mathbb{Z}) \times H_1(S,\Sigma,\mathbb{Z}) \to \mathbb{Z}$ is the intersection product and $\gamma_i$ is the core curve of the cylinder $\mathcal{C}_i$. See Sections 6.1 and 6.2 in \cite{hooper2015rel} for more details.

\medskip

\begin{figure}[h]
    \centering
\begin{tikzpicture}
    \begin{scope}[shift={(0,0)}, scale=0.7]
    \draw (1,0) rectangle (2,0.5);
    \draw (0,.5) rectangle (2,1);
    \draw (0,1) rectangle (1,1.5);
    \draw (1,-.5) node{$\tau(0) = q$};
    \draw[>=latex,->](2.5,.75) .. controls +(45:.5) and +(135:.5) .. (4.5,.75) ;
    \end{scope}
    
    \begin{scope}[shift={(4,0)},scale=0.7]
    \draw (1,0.5) -- (.5,0) -- (1.5,0) -- (2,.5);
    \draw (0,.5) rectangle (2,1);
    \draw (0,1) -- (1,1.5) -- (2,1.5) -- (1,1);
    \draw (1,-.5) node{$\tau(1,0,-1/2)$};
    \draw[fill=black,opacity = .1] (1,1) -- (1,1.5) -- (2,1.5) -- cycle;
    \draw[dashed] (1,1) -- (1,1.5);
    \end{scope}
    
    \begin{scope}[shift={(7,0)},scale=0.7]
    \draw (-1,0.75) node{$=$};
    \draw (1,0.5) -- (.5,0) -- (1.5,0) -- (2,.5);
    \draw (0,.5) rectangle (2,1);
    \draw (0,1) rectangle (1,1.5);
    \draw (1,-.5) node{$\tau (0,0,-1/2)$};
    \draw[fill=black,opacity = .1] (0,1) -- (0,1.5) -- (1,1.5) -- cycle;
    \draw[dashed] (0,1) -- (1,1.5);
    \end{scope}

    \end{tikzpicture}
	 \caption{twisting cylinders}
    \label{twistmap}
\end{figure}

\medskip

From now on and until the end of this section we assume that \textbf{$\moduli$ is a stratum of surfaces with exactly two singularities and that the horizontal decomposition of $q$ is stable \textit{i.e} any horizontal saddle connection connects a zero to itself.}  We shall also say that a cylinder is stable when any saddle connection on one of its boundary components connects a zero to itself, hence the horizontal decomposition is stable if, and only if, the horizontal cylinders are stable. A stable cylinder is said to be relative if the singularity on its top component is not the same as the one on its bottom component, or equivalently, if any cross section of this cylinder is not an absolute cycle. We recall that a cross section of a cylinder is a path inside that cylinder that connects a singularity on one of its boundary component to a singularity on the other boundary component, while crossing the core curve only once. Conversely, a cylinder is said to be absolute if it has the same singularities on its two boundaries. There is an equivalence relation on the set of relative cylinders, defined as follows. Denote by $\partial$ the boundary map from $H_1(X,\Sigma)$ to $H_0(\Sigma)$ and choose for each cylinder $\mathcal{C}_i$ a cross section $\beta_i$. Since we assumed that there are only two singularities, the image of $\partial$ is one-dimensional. Thus $\mathcal{C}_1$ and $\mathcal{C}_2$ are said to be of same type if $\partial(\beta_1)$ is positively colinear to $\partial(\beta_2)$. Notice that this does not depend on the choice of $\beta_1$ and $\beta_2$ as we assumed that the horizontal cylinders are stable. We denote by $\mathfrak{C}^+$ and $\mathfrak{C}^-$ the two families of relative cylinders having the same type. Note that the absolute cylinders are the ones whose cross sections are in the kernel of $\partial$ and we denote by $\mathfrak{C}^0$ the set of absolute cylinders. Let $\delta_i$ be $1$ if $\mathcal{C}_i \in \mathfrak{C}^+$, $-1$ if $\mathcal{C}_i \in \mathfrak{C}^-$ and $0$ if $\mathcal{C}_i$ is an absolute cylinder. For a given cylinder $\mathcal{C}_i$, we denote by $\mu_i = h_ic_i^{-1}$ its modulus and let $\mu := (\mu_1,\cdots,\mu_m)$ , $u = (\delta_1c_1^{-1},\cdots,\delta_mc_m^{-1})$, $V_q = span_{\mathbb{R}} \langle \mu,u \rangle \subset \mathbb{R}^m$. Let $d$ denote the algebraic degree of the $\delta_ic^{-1}_i$, \textit{i.e} the dimension of the $\mathbb{Q}$-vector space spanned by these numbers. Finally, let $z \in S$ corresponding to the singularity located on the bottom component of cylinders in $\mathfrak{C}^+$ and let $\gamma$ be a homotopically trivial loop on $S$ that circles $z$ clockwise and let $v = I(\gamma,\cdot) \in ker \rho$. We recall that $I : H_1(S-\Sigma,\mathbb{Z}) \times H_1(S,\Sigma,\mathbb{Z}) \to \mathbb{Z}$ is the intersection product. Notice that by definition, we have for any $i \in \{1,\cdots,m\}$  $v(\beta_i) = \delta_i$. Finally, for any $x \in \mathbb{R}^m$ and $t>0$ the surface $Rel_v^t \circ \tau(x)$ is well-defined, which comes from the fact that $\tau(x)$ does not have horizontal saddle connections connecting different singularities.  

\begin{prop}\label{intertwinedflows}
For any $w = t_1\mu + t_2u \in V_q$, the straight line flow $\phi_w^t : x \mapsto x + t\cdot w$ on $\mathbb{R}^m$ induces a well-defined flow on $\mathbb{T}^m / \Delta$ and for any $t>0$, we have:

\begin{equation}\label{intertwined}
\tau \circ \phi_w^t = \left( \begin{smallmatrix} 1 && t \cdot t_1 \\ 0  && 1 \end{smallmatrix} \right) \cdot Rel_{t_2\cdot v}^t  \circ \tau
\end{equation}

\end{prop}

\begin{proof} We first show that the straight line flow $\phi_w^t$ induces a well-defined flow on $\mathbb{T}^m / \Delta$. It is enough to show that $\phi_w^t$ commutes with the action of $\Delta$, which amounts to showing that if $\Delta$ permutes $\mathcal{C}_i$ with $\mathcal{C}_j$ then $\mu_i = \mu_j$ and $u_i = u_j$. But because $\Delta$ can only swap isometric cylinders, we necessarily have $c_i = c_j$ and $h_i = h_j$. This shows that $\mu_i = \mu_j$. Then, since zeroes are labeled, we also know that $\delta_i = \delta_j$ and that shows that $u_i = u_j$. Now, using Equation \eqref{derivative}, we know that for any $x \in \mathbb{R}^m$, we have $D_x \tau(u) = \sum_{i=1}^m \delta_i I(\gamma_i, \cdot)$ and we claim that this is exactly $v$. One way to see this is to use a basis of $H_1(S,\Sigma,\mathbb{C})$ obtained by adjoining to the $\beta_i$ a family of saddle connections that live on the boundaries of the horizontal cylinders of $q$. Notice that $v$ evaluates to zero on the latter because we assumed that the decomposition of $q$ into horizontal cylinders is stable, which means that the boundary components of the cylinders of $q$ are made of absolute cycles. By definition, $v$ vanishes on absolute cycles. Thus, after noticing that the the $\beta_i$ are dual to the $\gamma_i$ with respect to the intersection product $I$, we see that $v = \sum_{i=1}^m v(\beta_i)I(\gamma_i,\cdot)$ and by definition $v(\beta_i) = \delta_i$. This computation shows that the constant vector field $w$ on $\mathbb{R}^m$, whose associated flow is $\phi_w^t$, is mapped by $\tau$ to the vector field that defines $Rel_v^t$. This proves $\tau \circ \phi_u^t =  Rel_{u}^t  \circ \tau$.  We show in the same fashion that $\tau \circ \phi_{\mu}^t = \left( \begin{smallmatrix} 1 && t \\ 0  && 1 \end{smallmatrix} \right) \cdot \tau$ and we conclude using the fact that real rel flows commute with the action of the upper triangular subgroup of $\GL$. 
\end{proof}

We shall say that a $\mathbb{R}$-linear subspace of $\mathbb{R}^m$ is rational if it has a basis comprised of vectors with rational entries, or equivalently that it is defined by linear equations with rational coefficients.

\begin{prop}\label{twist}
If $q$ is contained in a rel-invariant rank one affine invariant orbifold $\M$, then $V_q$ is rational. In particular, $d \leq 2$.  
\end{prop}

\begin{proof}

Denote by $pr : \mathbb{R}^m \to \mathbb{T}^m / \Delta$ the canonical projection. By assumption $\M$ is rel-invariant and $\GL$-invariant, it thus follows from Equation \eqref{intertwined} that $\tau \circ pr(V_q)$ is contained in $\M$. Let $V$ be the smallest rational subspace that contains $V_q$. It is a consequence of the Kronecker theorem (see Proposition 7 on page 74 of \cite{bourbaki1966elements}) that $pr(V_q)$ is dense in $pr(V)$ and since $\M$ is closed, we deduce that $\tau \circ pr(V)$ is contained in $\M$. In particular, $D_0\tau(V)\otimes \mathbb{C}$ is a complex linear subspace of $T_q\M$ that we view as a subspace of $H^1(S,\Sigma,\mathbb{C})$. It follows from Equation $\eqref{derivative}$ together with the fact that core curves of parallel cylinders do not intersect that $\rho(D_0\tau(V)\otimes \mathbb{C})$ is an isotropic subspace of $H^1(S,\mathbb{C})$. But since $\M$ has rank 1, we know that $\rho(T_q\M)$ is a symplectic subspace of complex dimension $2$. It comes that the complex dimension of $\rho(D_0\tau(V)\otimes \mathbb{C})$ is 1 and since $ker \rho$ has complex dimension $1$, that $D_0\tau(V)\otimes \mathbb{C}$ has complex dimension at most $2$. Using that $D_0\tau$ is injective, we deduce that $V$ has real dimension at most 2 and thus that $V_q = V$ \textit{i.e} that $V_q$ is rational. 
\end{proof}

\begin{prop}\label{equation}
If $d = 2$, any linear equation with rational coefficients satisfied by the entries of $u$ (\textit{i.e} by the $\delta_i c^{-1}_i$) are also satisfied by the entries of the elements of $V_q$. 
\end{prop}

\begin{proof}
By Proposition \ref{twist}, the smallest rational linear space that contains $u$ is $V_q$. In particular, any linear relation with rational coefficients satisfied by the entries of $u$ has to be satisfied by that of any vector in $V_q$.
\end{proof}

Proposition \ref{equation} also appears in \cite{apisa2017rank} (Lemma 3.4) in a slightly different disguise.

\begin{cor}\label{notmixed}
If the relative cylinders do not cover $X_q$ then their circumferences are pairwise commensurable.
\end{cor}

\begin{proof}
Let $\mathcal{C}_i$ be one of the horizontal absolute cylinders and suppose there are relative cylinders with non commensurable circumferences. This means in particular that $d = 2$. By definition, we have $\delta_i=0$ and thus the $ith$ entry of $u$ vanishes. This is a linear relation with rational coefficients satisfied by the entries of $u$ and thus by Proposition \ref{equation}, this equation must be satisfied by the entries of $\mu$. This is a contradiction as moduli do not vanish.  
\end{proof}

\begin{prop}\label{noneq}
If $d = 2$, two relatives cylinders that do not have same type cannot have commensurable circumferences.  
\end{prop}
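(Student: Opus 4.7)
The plan is to argue by contradiction and to mimic exactly the argument of Corollary \ref{notmixed}, only now exploiting the sign difference $\delta_i = +1$ versus $\delta_j = -1$ that characterises non equivalent mixed cylinders.

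First I would suppose that two non equivalent cylinders $\mathcal{C}_i$ and $\mathcal{C}_j$ have commensurable circumferences. Since the two classes $\mathfrak{C}^+$ and $\mathfrak{C}^-$ partition the mixed cylinders (and non mixed cylinders contribute $0$ to $u$, so do not give rise to genuine commensurability relations beyond the one already handled in Corollary \ref{notmixed}), I may assume $i \in \mathfrak{C}^+$ and $j \in \mathfrak{C}^-$, i.e.\ $\delta_i = +1$ and $\delta_j = -1$. Commensurability provides a positive rational $q$ with $c_i = q c_j$, which passing to the inverses $\gamma_\ell = c_\ell^{-1}$ reads as
\[
q \gamma_i - \gamma_j = 0 .
\]

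Next I would rewrite this identity in terms of the coordinates of the vector $u = (\delta_1 \gamma_1 , \dots , \delta_m \gamma_m)$. Because of the sign flip between the two classes one obtains
\[
q (\delta_i \gamma_i) + (\delta_j \gamma_j) = q \gamma_i - \gamma_j = 0,
\]
which is a rational linear relation between the $i$-th and $j$-th coordinates of $u$, with both coefficients $q$ and $1$ strictly positive.

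Finally I would invoke Proposition \ref{equation}, which is available because $d = 2$: every rational relation among the coordinates of $u$ must also be satisfied by the coordinates of $\mu = (\mu_1 , \dots , \mu_m)$. Therefore $q \mu_i + \mu_j = 0$. But moduli of cylinders are strictly positive and $q > 0$, so the left hand side is strictly positive, a contradiction. No step looks delicate here; the only thing to be careful about is the signs, and the whole point of the statement is precisely that the opposite signs carried by $\mathfrak{C}^+$ and $\mathfrak{C}^-$ turn the relation on $u$ into a relation on $\mu$ with coefficients of the same sign, which positivity of moduli immediately rules out.
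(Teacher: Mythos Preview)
Your proof is correct and follows essentially the same approach as the paper: argue by contradiction, use the opposite signs $\delta_i = +1$, $\delta_j = -1$ to rewrite the commensurability relation as a rational relation on the coordinates of $u$ with coefficients of the same sign, apply Proposition~\ref{equation} to transfer it to $\mu$, and conclude from positivity of moduli. The only cosmetic difference is that the paper writes the relation as $\mu_1 = -q\mu_2$ rather than $q\mu_i + \mu_j = 0$.
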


\begin{proof}
Suppose to a contradiction that two cylinders $\mathcal{C}_1$ and $\mathcal{C}_2$ that do not have same type have commensurable circumferences, and say $\mathcal{C}_1$ belongs to $\mathfrak{C}^+$. There is thus a positive rational number $r$ such that $c_1^{-1} = r c_2^{-1} = -r \delta_2c_2^{-1}$. By Proposition \ref{equation}, it implies that $\mu_1 = - r \mu_2$. This is a contradiction as moduli are positive numbers. 
\end{proof}

\begin{cor}\label{height}
If $d = 2$, any pair of relative cylinders with commensurable circumferences have same height. Reciprocally, if two cylinders of same type have same height, then their circumferences are commensurable.
\end{cor}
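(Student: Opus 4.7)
The plan is to treat the two implications separately, both driven by Proposition \ref{equation}. For the forward direction, I would first note that Proposition \ref{noneq} forces two mixed cylinders $\mathcal{C}_i, \mathcal{C}_j$ with commensurable circumferences to be equivalent, so that $\delta_i = \delta_j$. A proportionality $\gamma_i = q\gamma_j$ with $q$ a positive rational then becomes a rational linear relation $\delta_i\gamma_i - q\delta_j\gamma_j = 0$ among the coordinates of $u$. Proposition \ref{equation} transports this to the same relation $\mu_i = q\mu_j$ among the coordinates of $\mu$; unwinding $\mu_k = h_k\gamma_k$ together with $\gamma_i = q\gamma_j$ gives $h_i = h_j$.

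For the reciprocal, suppose $\mathcal{C}_i$ and $\mathcal{C}_j$ are equivalent with $h_i = h_j = h$, and set $\delta := \delta_i = \delta_j$. My approach is to study the vector $v := \mu - h\delta \cdot u \in V(X)$: a direct check using $\mu_k = h_k\gamma_k$ shows that both $v_i$ and $v_j$ vanish. Assume for contradiction that $\gamma_i$ and $\gamma_j$ are not commensurable. Then $\pi_{ij}(u) = (\delta\gamma_i,\delta\gamma_j)$ lies on no rational line in $\mathbb{R}^2$, so the rational subspace $\pi_{ij}(V(X)) \subseteq \mathbb{R}^2$ must be the whole plane. Combined with the fact that $V(X)$ has dimension exactly $d = 2$ (it coincides, by the proof of Proposition \ref{equation}, with the smallest rational subspace containing $u$), this forces $\pi_{ij}|_{V(X)}$ to be an isomorphism. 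In particular $v = 0$, i.e., $\mu = h\delta \cdot u$.

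It remains to rule out this degenerate equality: from $\mu_k = h\delta \cdot \delta_k\gamma_k$ for every $k$, positivity of all moduli eliminates non-mixed cylinders ($\delta_k = 0$) as well as cylinders of the opposite equivalence class ($\delta_k = -\delta$). Hence every cylinder of $X$ lies in the same equivalence class as $\mathcal{C}_i$ and has height $h$, which makes $\mu$ a scalar multiple of $u$. Then $V(X) = \mathbb{R}u$ has dimension one, contradicting $d = 2$. I expect this final ``collapse'' step to be the main obstacle, since it requires carefully combining positivity of the moduli with the dimensional information extracted from Lemma \ref{twist}; once that collapse is ruled out, the result follows.
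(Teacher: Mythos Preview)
Your forward direction is identical to the paper's. For the reciprocal, your argument is correct but considerably more circuitous than what the paper does. The paper simply observes that since $\mathcal{C}_i$ and $\mathcal{C}_j$ are equivalent with common height $h$, both generators of $V(X)$ project to scalar multiples of $(\gamma_i,\gamma_j)$: indeed $\pi_{ij}(u)=\delta(\gamma_i,\gamma_j)$ and $\pi_{ij}(\mu)=(h\gamma_i,h\gamma_j)=h(\gamma_i,\gamma_j)$. Hence $\pi_{ij}(V(X))$ is the line $\mathbb{R}(\gamma_i,\gamma_j)$, and since $V(X)$ is rational by Lemma~\ref{twist}, this line is rational, so $\gamma_i/\gamma_j\in\mathbb{Q}$. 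That is the whole proof.

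Your vector $v=\mu-h\delta\,u$ is precisely the witness that $\pi_{ij}(\mu)$ and $\pi_{ij}(u)$ are proportional, so you have the key observation in hand; but instead of reading off the conclusion immediately, you argue by contradiction, invoke the hypothesis $d=2$ to force $\pi_{ij}|_{V(X)}$ to be an isomorphism, deduce $v=0$, and then run the ``collapse'' step. All of that is valid, but it is unnecessary: once you know $\pi_{ij}(V(X))$ is at most a line (which is what $\pi_{ij}(v)=0$ together with $v\in V(X)$ is telling you, or what the direct computation above shows), rationality of $V(X)$ finishes things without any contradiction argument and without ever using $d=2$. In particular your anticipated ``main obstacle'' never actually needs to be overcome.
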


\begin{proof}
For the first assertion let $\mathcal{C}_1$ and $\mathcal{C}_2$ be two relative cylinders with commensurable circumferences. Proposition \ref{noneq} implies that these cylinders have same type, \textit{i.e} $\delta_1 = \delta_2$. Take a rational number $r$ such that $c^{-1}_1 = rc^{-1}_2$. By Proposition \ref{equation}, it implies that $\mu_1 = r \mu_2 = \frac{c_2}{c_1} \mu_2$. This equation is exactly $h_1 = h_2$. \\

For the second assertion, let $\mathcal{C}_1$ and $\mathcal{C}_2$ be two relative cylinders with same type and same height $h$ and suppose their circumferences are not commensurable. Let $\mathcal{C}_3$ be another relative cylinder of the other type, which means that $\delta_1 \delta_3 = \delta_2 \delta_3 = -1$. Since $d = 2$, we can find $r,s \in \mathbb{Q}$ such that $\delta_3c^{-1}_3 =r \delta_1c^{-1}_1 + s \delta_2 c^{-1}_2$. Proposition \ref{equation} now implies that $\mu_3 =
r \mu_1 + s \mu_2$ and this relation simplifies to $h_3 = \delta_1 \delta_3 h = -h$ which is a contradiction.
\end{proof}

\subsection{The field of definition}

We recall a useful formula for the field of definition of an affine invariant orbifold $\M$. It has been proved by Wright in \cite{wright2015cylinder}. 

\begin{prop}\label{field}
Let $q$ be a translation surface in $\M$ that is decomposed into $m$ horizontal cylinders whose circumferences are denoted by $c_i$. Then, the following formula holds : 

$$
k(\M) \subseteq \mathbb{Q}[c_2c_1^{-1},\cdots,c_mc_1^{-1}]
$$
\end{prop}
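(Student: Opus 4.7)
My plan is to follow Wright's cylinder deformation approach: produce explicit elements of the real subspace $V \subset H^1(S,\Sigma,\mathbb{R})$ defining $\M$ locally, and then show the linear equations cutting out $V$ can be chosen with coefficients in $k_0 := \mathbb{Q}[c_2 c_1^{-1}, \ldots, c_m c_1^{-1}]$. By minimality, this would yield $k(\M) \subseteq k_0$ as claimed.

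First, I would invoke the cylinder deformation theorem. The $m$ horizontal cylinders of $(X,\omega)$ partition into $\M$-equivalence classes $C_1,\ldots,C_r$, and for each $\ell$, applying the action of $SL_2(\mathbb{R})$ only to the cylinders in $C_\ell$ (leaving the others fixed) stays in $\M$. Differentiating the diagonal subgroup at the identity produces a cohomology class $w_\ell \in V \otimes \mathbb{C}$ whose value on a core curve $\gamma_i$ is $c_i$ if $i \in C_\ell$ and $0$ otherwise. Consequently, the image of the evaluation map $\rho_h \colon V \to \mathbb{R}^m$, $\alpha \mapsto (\alpha(\gamma_i))_i$, contains the vectors $v_\ell = (c_i \mathbf{1}_{i\in C_\ell})_i$. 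Conversely, the cylinder deformation theorem forces the ratios $\alpha(\gamma_i)/c_i$ to be constant on each class $C_\ell$ for every $\alpha \in V$, so $\rho_h(V) = \operatorname{span}\{v_1,\ldots,v_r\}$. The annihilator of $\rho_h(V)$ inside $(\mathbb{R}^m)^\ast$ is defined by the equations $\sum_{i \in C_\ell} c_i a_i = 0$, and after dividing by $c_1$ these have coefficients in $k_0$.

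Second, I would describe the rest of the annihilator of $V$, namely the functionals that also involve cross-curve evaluations $\sigma_i^\ast$. Since the core curves $\gamma_1,\ldots,\gamma_m$ are rational cycles, the subspace $\ker \rho_h \subset H^1(S,\Sigma,\mathbb{R})$ is $\mathbb{Q}$-rational. The horocycle deformation restricted to each $\M$-class, together with the rel deformations preserving horizontal core-curve periods, produces tangent vectors in $V \cap \ker\rho_h$ whose coordinates in the dual basis $\{\sigma_i^\ast\}$ are $\mathbb{Q}$-rational (the $c_i$'s enter only through the $\gamma_i^\ast$ component, which has been quotiented out). From this one argues that $V \cap \ker \rho_h$ is defined over $\mathbb{Q}$, hence that the remaining functionals in the annihilator of $V$ are $\mathbb{Q}$-rational. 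Combining with the previous paragraph, $V$ is cut out by equations with coefficients in $k_0$.

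The main technical difficulty is this last step: showing that the piece of $V$ living inside $\ker \rho_h$ is in fact $\mathbb{Q}$-rational. Even though $V$ is defined over $k(\M)$ and $\ker\rho_h$ over $\mathbb{Q}$, their intersection need not automatically inherit a nice field of definition, and ruling out exotic behavior requires Wright's finer analysis of how the rel and horocycle subspaces sit inside $V$. Once this is established, the splitting $0 \to V \cap \ker\rho_h \to V \to \rho_h(V) \to 0$ (with lifts provided by the $w_\ell$) delivers the claimed inclusion.
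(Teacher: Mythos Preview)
The paper does not actually prove this proposition: it is quoted as a known result of Wright (Theorem~1.5 in \cite{wright2015cylinder}), with no argument given beyond the citation. So there is no ``paper's own proof'' to compare against; what you have written is an attempted reconstruction of Wright's argument.

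Your outline is in the right spirit, but the justification you offer for the key step is not correct. You claim that the horocycle deformation restricted to an $\M$-class, and the rel deformations, produce vectors in $V \cap \ker \rho_h$ whose $\sigma_i^\ast$-coordinates are $\mathbb{Q}$-rational. They are not: the tangent vector to the horocycle shear of a class $C_\ell$ has $\sigma_i$-coordinate equal to the height $h_i$ for $i \in C_\ell$, and heights are arbitrary real numbers depending on the particular surface. Likewise rel deformations have no reason to have rational $\sigma_i^\ast$-coordinates. So the sentence ``the $c_i$'s enter only through the $\gamma_i^\ast$ component, which has been quotiented out'' does not establish rationality of anything. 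You correctly flag this step as the main difficulty, but then immediately defer it to ``Wright's finer analysis'' --- which means the proposal, as written, is a plan with its central step missing rather than a proof.

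If you want to complete this, Wright's actual argument does not attempt to show $V \cap \ker\rho_h$ is $\mathbb{Q}$-rational directly from explicit tangent vectors. Instead he works with the \emph{twist space} (the span of the cylinder-shear vectors inside $V$) and shows that $V$ modulo this twist space is defined over $\mathbb{Q}$ using monodromy/variation arguments, while the twist space itself is visibly defined over $k_0$ by the cylinder deformation theorem. The short exact sequence you wrote down is morally correct, but filling in the rationality of the kernel piece is the whole content of the theorem, not a technical afterthought.
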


Wright actually proved a stronger version and established the other inclusion if one considers only a subclass of cylinders, but we will not need such generality. 

\subsection{Cylinders, complete periodicity and overcollapse}\label{overcollapse}

\begin{prop}\label{existence}
There exists a horizontally periodic surface in any affine invariant orbifold. 
\end{prop}

\begin{proof}
Let $\M$ be an affine invariant orbifold and let $q \in \M$. Corollary 6 in \cite{smillie2004minimal} states that there is a horizontally periodic surface in the horocycle orbit closure of $q$. The proposition follows from the fact that $\M$ is $GL_2(\mathbb{R})$-invariant and closed.  
\end{proof}

The last tool we will need is the complete periodicity property. See Proposition 4.20 and Remark 4.21 in \cite{wright2014translation}

\begin{prop}\label{periodicity}
Let $\M$ be a rank one affine invariant orbifold and let $q$ be a translation surface in $\M$. If there is a saddle connection on $X_q$ in direction $\theta$ that joins a singularity to itself, then $X_q$ is decomposed into cylinders in direction $\theta$. 
\end{prop}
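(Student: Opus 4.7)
The plan is to reduce to the horizontal direction and then rule out the existence of a minimal horizontal component using the rank one hypothesis. Since $\M$ is $\GL$-invariant, the rotation $r_{-\theta} \in SO(2) \subset \GL$ sends $(X,\omega)$ to another surface in $\M$ for which the given saddle connection is horizontal; so I would assume from the start that $\theta = 0$ and that $\gamma$ is a horizontal saddle connection with both endpoints at a singularity $p$. Its holonomy $\int_\gamma \omega$ is a non-zero real number, and $[\gamma]$ lies in $H_1(X,\mathbb{Z})$ (an absolute cycle).

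Next I would invoke Smillie's decomposition theorem for the horizontal foliation: $X$ decomposes into finitely many maximal horizontal cylinders together with finitely many minimal components $M_1,\dots,M_k$ whose boundaries are unions of horizontal saddle connections. The goal is to show $k=0$. Suppose for contradiction that some minimal component $M$ exists.

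The key step is to leverage the rank one hypothesis. By Avila--Eskin--M\"oller, $\rho(V)$ is a symplectic subspace of $H^1(X,\mathbb{R})$ of real dimension $2$, and it contains $\mathrm{Re}(\omega)$ and $\mathrm{Im}(\omega)$ because $\M$ is $\GL$-invariant. On the other hand, a minimal component $M$ supports its own non-trivial horizontal shears and stretches: one can construct a smooth one-parameter family of deformations of $(X,\omega)$ supported on $M$ (shearing horizontally only inside $M$, keeping everything outside fixed) whose tangent vector at $t=0$ is non-zero in $H^1(X,\Sigma,\mathbb{C})$ but whose image under $\rho$ is linearly independent from $\mathrm{Re}(\omega)$ and $\mathrm{Im}(\omega)$. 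If these deformations are tangent to $\M$, that would contradict $\dim \rho(V) = 2$; if they are not tangent to $\M$, then a standard cylinder-stretching/shearing argument \`a la Wright, combined with closedness of $\M$ under $\GL$ and density of nearby periodic directions, would produce a surface in $\M$ with an impossible configuration of periods. Either horn of the dichotomy rules out $M$.

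The main obstacle is making the deformation argument inside the minimal component $M$ precise: defining a genuine tangent direction to $\moduli$ that is supported on $M$ and extracting the right period relation to contradict rank one. The cleanest path, and the one I would ultimately follow, is to quote Wright's cylinder deformation theorem and the complete periodicity proposition in \cite{wright2015cylinder}, where exactly this contradiction is worked out using the cylinder preserving $\mathbb{R}$-algebra acting on the minimal component. In other words, after setting up Steps 1 and 2 above, I would simply cite Wright for the final contradiction rather than reconstruct the cylinder deformation machinery here.
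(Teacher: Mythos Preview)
The paper gives no proof of this proposition at all: it simply states the result and refers the reader to \cite{wright2015cylinder}. Your proposal ultimately lands in the same place, so as a reference it is acceptable and matches the paper.

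That said, the sketch you offer before citing Wright contains a genuine misstep. There is no ``horizontal shearing deformation supported on a minimal component $M$''. Shearing (and stretching) is well-defined on a cylinder precisely because the horizontal leaves close up into circles, so one can cut along a core curve and re-glue after an affine map; in a minimal component the leaves are dense and no such cut-and-reglue exists, so the one-parameter family you describe is not a deformation of the translation structure. Consequently the dichotomy you set up (``either this deformation is tangent to $\M$ or it is not'') never gets off the ground. Wright's actual argument does not deform minimal components; it applies the cylinder deformation theorem to the cylinders that are present and shows, via the structure of the twist space, that in rank one the existence of any absolute class with real horizontal period forces the entire horizontal direction to be periodic. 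The phrase ``cylinder preserving $\mathbb{R}$-algebra acting on the minimal component'' is also not something that appears in Wright's proof.

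Since you correctly identify the obstacle and fall back on the citation, the bottom line is fine; but I would delete the heuristic sketch rather than leave an incorrect outline in place.
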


We end this section by recalling a particular case of a construction that already appeared in the work of Apisa, Aulicino, Nguyen and Wright. It is usually referred to as the overcollapse of a cylinder or an extended cylinder deformation. See \cite{apisa2017rank}, \cite{aulicino2016rank1}, \cite{aulicino2016rank}, \cite{apisa2021marked}, \cite{apisa2021high}, \cite{apisa2020reconstructing}. Let $q \in \moduli$ be a horizontally periodic translation surface with a stable decomposition, let $h$ be the smallest height of a cylinder in $\mathfrak{C}^-$ and let $\mathcal{C}$ be a cylinder in $\mathfrak{C}^-$ whose height is $h$. Let also $\varepsilon$ be any positive number small enough so that cylinders in $\mathfrak{C}^-$ either have their height equal to $h$ or bigger than $h+\varepsilon$. We can shear the surface $q$ using the horocycle flow so that $q$ does not have any vertical saddle connection and let $q' = Rel_{i\cdot v}^{h+\varepsilon}(q)$. We say that $q'$ is obtained by overcollapsing the cylinder $\mathcal{C}$ using rel, or just overcollapsing the cylinder $\mathcal{C}$ for simplicity. We recall that $v$ was defined in Section 3.2, just before Proposition \ref{intertwinedflows}. Notice that the path $\gamma: t \mapsto Rel_{i \cdot v}^t(q)$ connects $q$ to $q'$ and for $t \in [0,h)$, the surface $\gamma(t)$ is obtained from $q$ by changing the height of the cylinders: for the ones in $\mathfrak{C}^+$ it is increased by $t$, for the ones in $\mathfrak{C}^-$ it is decreased by $t$ and for the ones in $\mathfrak{C}^0$ it is unchanged. When $t$ tends to $h$, the height of the cylinder $\mathcal{C}$ then reaches $0$, we say it collapses. This is also the case of all the other cylinders in $\mathfrak{C}^-$ whose height is $h$. Just after $t=h$, we see the birth of one, or several, cylinders of height $t-h$ hence the term overcollapsing. We include a depiction of that phenomenon: 

\begin{center}
    \begin{tikzpicture}
    \begin{scope}[scale=.8]
    \draw (0,.5) -- (0,0) -- (2,0) -- (2,.5) -- (1,.5);
    \draw (0,.5) -- (.5,1) -- (1.5,1) -- (1,.5);
    
    \draw (0,0) node{$\times$};
    \draw (1,0) node{$\times$};
    \draw (2,0) node{$\times$};
    \draw (.5,1) node{$\times$};
    \draw (1.5,1) node{$\times$};
    
    \draw (0,.5) node{$\bullet$};
    \draw (1,.5) node{$\bullet$};
    \draw (2,.5) node{$\bullet$};
    
    \draw (.5,-.25) node[scale = .7]{A};
    \draw (1,1.25) node[scale = .7]{A};
    
    \draw (.6,.75) node[scale = .8]{$\mathcal{C}$};
    
    \draw (1,-1) node{$q = \gamma(0)$};

    \end{scope}
   
    \begin{scope}[scale=.8, shift = {(5,0)}]
    \draw (0,0) rectangle (2,1); 
    
    \draw (0,0) node{$\times$};
    \draw (1,0) node{$\times$};
    \draw (2,0) node{$\times$};
    \draw (.5,1) node{$\times$};
    
    \draw (0,1) node{$\bullet$};
    \draw (.5,0) node{$\bullet$};
    \draw (1,1) node{$\bullet$};    
    \draw (2,1) node{$\bullet$};

    \draw (1,-1) node{$\gamma(h)$};
    \end{scope}
    
    \begin{scope}[scale=.8, shift = {(10,0)}]
    \draw (0,0) -- (0,1.25); 
    \draw (1,1.25) -- (2,1.25) -- (2,0) -- (1,0) ;
    \draw (0,0) -- (.5,.25) -- (1,0); 
    \draw (0,1.25) -- (.5,1) -- (1,1.25);
    \draw (0,.25) -- (2,.25);
    \draw (0,1) -- (2,1);
    
    \draw (0,0) node{$\times$};
    \draw (1,0) node{$\times$};
    \draw (2,0) node{$\times$};
    \draw (.5,1) node{$\times$};
    
    \draw (0,1.25) node{$\bullet$};
    \draw (1,1.25) node{$\bullet$};
    \draw (2,1.25) node{$\bullet$};
    \draw (.5,.25) node{$\bullet$};   
    
    \draw (1,-1) node{$q' = \gamma(h+\varepsilon)$}; 
    \end{scope}
    \end{tikzpicture}
\end{center}

\section{The stratum $\mathcal{H}^{odd}(2,2)$} \label{H22}

\begin{dfn}\label{entangled}
Let $\M$ be an affine invariant orbifold and let $q \in \M$. We say that two cylinders on $q$ are $\M$-entangled if they are parallel, have same height and remain so on any nearby surface in $\M$.
\end{dfn}

In the sequel, we will use the list of cylinder diagrams in $\mathcal{H}^{odd}(2,2)$ given in Appendix A. When we say that the cylinder diagram of $q$ is given by $i.j$, we mean that $q$ is horizontally periodic and that the cylinder diagram of $q$ is the one depicted in figure $i.j$ in Appendix A. We use the convention that cylinders are numbered from bottom to top. 

\begin{prop}\label{atmost}
Let $\M$ be a rel-invariant and nonarithmetic affine invariant orbifold. There are at most three families of $\M$-entangled cylinders in a given direction on any surface $q \in \M$. In particular, if the cylinder diagram of $q$ is given by 4.2, then two of the cylinders amongst $\mathcal{C}_2$, $\mathcal{C}_3$ and $\mathcal{C}_4$ are $\M$-entangled and if it is given by 4.3, then $\mathcal{C}_1$ and $\mathcal{C}_4$ are $\M$-entangled. Case 4.1 does not occur and Cases 4.5 and 4.4 can be reduced to Cases 4.2 and 4.3 (via overcollapse).
\end{prop}

\begin{proof}
The proof will use overcollapses of cylinders, see \S \ref{overcollapse} for a definition. Let $q \in \M$ and suppose $q$ has a horizontal cylinder. By Proposition \ref{periodicity}, we know that $q$ is horizontally periodic. If the corresponding cylinder decomposition is made of at most three cylinders there is nothing to prove so we can assume that $q$ is made of four horizontal cylinders. We analyze the different possible cases for the cylinder diagram of $q$ separately:

\textbf{Case 4.2} We claim that two of the cylinders 2, 3 and 4 are $\M$-entangled. Label the types of cylinders so that these cylinders are in $\mathfrak{C}^-$. If there are two cylinders in $\mathfrak{C}^-$ with same height, these cylinders will remain parallel on any nearby surface since the rank of $\M$ is 1 and because they have the same type they also keep the same height on any nearby surface, thus they are $\M$-entangled. We recall that a neighborhood of $q$ when $\M$ has rank 1 is described by surfaces of the form $g \cdot Rel_w^1(q)$, where $w$ is any small vector in $ker\rho$ and $g$ is a matrix in $\GL$ close to the identity. Assume to a contradiction that no two cylinders in $\mathfrak{C}^-$ have same height and we can assume without loss of generality that the cylinder with smallest height is $\mathcal{C}_4$. Shear the surface $q$ so that $\mathcal{C}_4$ is not crossed by a vertical saddle connection and overcollapse it using rel to reach a surface $q'$ whose cylinder decomposition is depicted in the following figure:

\begin{center}
    \begin{tikzpicture}
    \begin{scope}[scale=.6]

\draw (0,0) rectangle (3,1);
\draw (0,2) rectangle (1,3);
\draw (0,4) rectangle (1,5);
\draw (0,6) -- (.5,7) -- (1.5,7) -- (1,6) -- cycle;

\draw (0,0) node{$\bullet$}; 
\draw (1,0) node{$\bullet$}; 
\draw (2,0) node{$\bullet$}; 
\draw (3,0) node{$\bullet$}; 

\draw (0,1) node{$\times$}; 
\draw (1,1) node{$\times$}; 
\draw (2,1) node{$\times$}; 
\draw (3,1) node{$\times$}; 

\draw (0,2) node{$\times$};
\draw (1,2) node{$\times$};

\draw (0,3) node{$\bullet$};
\draw (1,3) node{$\bullet$};

\draw (0,4) node{$\times$};
\draw (1,4) node{$\times$};

\draw (0,5) node{$\bullet$};
\draw (1,5) node{$\bullet$};

\draw (0,6) node{$\times$};
\draw (1,6) node{$\times$};

\draw (0.5,7) node{$\bullet$};
\draw (1.5,7) node{$\bullet$};

\draw (.5,.25) node[scale=.7]{0};
\draw (1.5,.25) node[scale=.7]{2};
\draw (2.5,.25) node[scale=.7]{1};

\draw (.5,.75) node[scale=.7]{5};
\draw (1.5,.75) node[scale=.7]{4};
\draw (2.5,.75) node[scale=.7]{3};

\draw (.5,2.25) node[scale=.7]{3};
\draw (.5,2.75) node[scale=.7]{1};

\draw (.5,4.25) node[scale=.7]{4};
\draw (.5,4.75) node[scale=.7]{2};

\draw (.5,6.25) node[scale=.7]{5};
\draw (1,6.75) node[scale=.7]{0};

\draw (1.5,-1) node{$q$};
\draw[>=latex,->](4,2.75) -- (5.5,2.75) ;
\draw (4.75,3.4) node[scale=.6]{overcollapse $\mathcal{C}_4$};

\end{scope}

 \begin{scope}[shift={(5,0)},scale=.6]

\draw (0,0) -- (.5,.25) -- (1,0) -- (3,0) -- (3,1.5) -- (1,1.5) -- (.5,1.25) -- (0,1.5) -- cycle;
\draw (0,2.5) rectangle (1,3);
\draw (0,4) rectangle (1,4.5);

\draw (0,0) node{$\bullet$}; 
\draw (1,0) node{$\bullet$}; 
\draw (2,0) node{$\bullet$}; 
\draw (3,0) node{$\bullet$}; 
\draw (.5,1.25) node{$\bullet$};

\draw (0,1.5) node{$\times$}; 
\draw (1,1.5) node{$\times$}; 
\draw (2,1.5) node{$\times$}; 
\draw (3,1.5) node{$\times$};
\draw (.5,.25) node{$\times$};

\draw (0,2.5) node{$\times$};
\draw (1,2.5) node{$\times$};

\draw (0,3) node{$\bullet$};
\draw (1,3) node{$\bullet$};

\draw (0,4) node{$\times$};
\draw (1,4) node{$\times$};

\draw (0,4.5) node{$\bullet$};
\draw (1,4.5) node{$\bullet$};

\draw (1.5,.25) node[scale=.7]{2};
\draw (2.5,.25) node[scale=.7]{1};

\draw (1.5,1.25) node[scale=.7]{4};
\draw (2.5,1.25) node[scale=.7]{3};

\draw (.5,2.25) node[scale=.7]{3};
\draw (.5,3.25) node[scale=.7]{1};

\draw (.5,3.75) node[scale=.7]{4};
\draw (.5,4.75) node[scale=.7]{2};

\draw (1.5,-1) node{$q'$};

\end{scope}
    \end{tikzpicture}
\end{center}

The surface $q'$ has three cylinders $\mathcal{C}'_1$, $\mathcal{C}'_2$ and $\mathcal{C}'_3$ coming from the cylinders $\mathcal{C}_1$, $\mathcal{C}_2$ and $\mathcal{C}_3$ on $q$ plus an additional cylinder $\mathcal{C}$ of height $\varepsilon$ (we use the notation of \S \ref{overcollapse}) that was created by the overcollapse of $\mathcal{C}_4$. Notice that for any $i \in \{1,2,3\}$, the circumference of the cylinder $\mathcal{C}'_i$ is the same as the one of $\mathcal{C}_i$, this is because our overcollapse deformation is isoperiodic and the circumference of a cylinder is the period of its core curve. The cylinders $\mathcal{C}_2$ and $\mathcal{C}_3$ do not have commensurable circumferences by Propositions \ref{height} and \ref{field} and thus there are two rational numbers $r,s$ such that $c^{-1}_1 = rc^{-1}_2 + sc^{-1}_3$, where $c_i$ is the circumference of $\mathcal{C}_i$. Denote by $\mu_1,\mu_2$ and $\mu_3$ (resp. $\mu'_1,\mu'_2$ and $\mu'_3$) the moduli of the cylinders $\mathcal{C}_1, \mathcal{C}_2$ and $\mathcal{C}_3$ on $q$ (resp. $\mathcal{C}'_1, \mathcal{C}'_2$ and $\mathcal{C}'_3$ on $q'$). The cylinder decompositions on $q$ and $q'$ are made of relative cylinders and their circumferences are not all pairwise commensurable by Proposition \ref{field}. It thus comes from Proposition \ref{equation} that:

\begin{align*}
    -\mu_1 &= r\mu_2 + s\mu_3 \\
    \mu'_1 &= r\mu'_2 + s\mu'_3 
\end{align*}

The important thing to notice here is  the minus sign in front of $\mu_1$ that does not appear in front of $\mu_1'$. This is due to that fact that when $\mathcal{C}_4$ overcollapses, the type of $\mathcal{C}_1$ is changed. We also compute that $\mu_1' = \mu_1 + c^{-1}_1(h_4 - \varepsilon)$, $\mu_2' = \mu_2 - c^{-1}_2(h_4+\varepsilon)$ and $\mu_3' = \mu_3 - c^{-1}_3(h_4+\varepsilon)$. We thus get: 

\begin{align*}
\mu'_1 - \mu_1 &= r(\mu_2 + \mu_2') + s(\mu_3 + \mu_3') \\
    &= 2r\mu_2 + 2s\mu_3 - (h_4 + \varepsilon)(rc^{-1}_2 + sc^{-1}_3) \\
    &= -2\mu_1 - c^{-1}_1(h_4 + \varepsilon) \\
\end{align*}

Using that $\mu_1' - \mu_1 = c^{-1}_1(h_4 - \varepsilon)$, we obtain that $h_1 + h_4 = 0$ which is a contradiction as the height of a cylinder is a strictly positive number. It follows that at least two of the cylinders $\mathcal{C}_2$, $\mathcal{C}_3$ and $\mathcal{C}_4$ have same height and these two cylinders are $\M$-entangled. 

\textbf{Case 4.3} We claim that $\mathcal{C}_1$ and $\mathcal{C}_4$ are $\M$-entangled. Indeed, suppose to a contradiction that $h_1 > h_4$ and shear the surface $q$ such that the saddle connection labeled $2$ sits above the singularity $\bullet$. If we overcollapse the cylinder $\mathcal{C}_4$, we reach a surface $q'$ as depicted below:

\begin{center}
    \begin{tikzpicture}
    \begin{scope}[scale=0.6]

\draw (0,0) rectangle (2,1);
\draw (0,2) rectangle (1,3);
\draw (0,4) rectangle (3,5);
\draw (0,6) -- (-.5,7) -- (1.5,7) -- (2,6) -- cycle;

\draw (0,0) node{$\bullet$};
\draw (1,0) node{$\bullet$};
\draw (2,0) node{$\bullet$};

\draw (0,1) node{$\times$};
\draw (2,1) node{$\times$};

\draw (0,2) node{$\times$};
\draw (1,2) node{$\times$};

\draw (0,3) node{$\bullet$};
\draw (1,3) node{$\bullet$};

\draw (0,4) node{$\times$};
\draw (1,4) node{$\times$};
\draw (3,4) node{$\times$};

\draw (0,5) node{$\bullet$};
\draw (2,5) node{$\bullet$};
\draw (3,5) node{$\bullet$};

\draw (0,6) node{$\bullet$};
\draw (2,6) node{$\bullet$};

\draw (-.5,7) node{$\times$};
\draw (.5,7) node{$\times$};
\draw (1.5,7) node{$\times$};

\draw (.5,.25) node[scale=.7]{0};
\draw (1.5,.25) node[scale=.7]{3};

\draw (1,.75) node[scale=.7]{5};

\draw (.5,2.25) node[scale=.7]{1};
\draw (.5,2.75) node[scale=.7]{0};

\draw (.5,4.25) node[scale=.7]{2};
\draw (2,4.25) node[scale=.7]{5};

\draw (1,4.75) node[scale=.7]{4};
\draw (2.5,4.75) node[scale=.7]{3};

\draw (1,6.25) node[scale=.7]{4};

\draw (0,6.75) node[scale=.7]{2};
\draw (1,6.75) node[scale=.7]{1};
 
\draw (1,-1) node{$q$};
\draw[>=latex,->](4.5,2.75) -- (6,2.75) ;
\draw (5.25,3.4) node[scale=.6]{overcollapse $\mathcal{C}_4$};

\end{scope}

    \begin{scope}[shift={(5,0)},scale=.6]

\draw (0,0) rectangle (2,.5);
\draw (0,1.5) rectangle (1,3);
\draw (0,5.5) -- (.5,5.25) -- (1.5,5.25) -- (2,5.5) -- (3,5.5) -- (3,4) -- (1,4) -- (.5,4.25) -- (0,4) -- cycle;

\draw (0,0) node{$\bullet$};
\draw (1,0) node{$\bullet$};
\draw (2,0) node{$\bullet$};

\draw (0,.5) node{$\times$};
\draw (2,.5) node{$\times$};

\draw (0,1.5) node{$\times$};
\draw (1,1.5) node{$\times$};

\draw (0,3) node{$\bullet$};
\draw (1,3) node{$\bullet$};

\draw (0,4) node{$\times$};
\draw (1,4) node{$\times$};
\draw (3,4) node{$\times$};

\draw (0,5.5) node{$\bullet$};
\draw (2,5.5) node{$\bullet$};
\draw (3,5.5) node{$\bullet$};

\draw (.5,5.25) node{$\times$};
\draw (1.5,5.25) node{$\times$};

\draw (.5,4.25) node{$\bullet$};

\draw (.5,-.25) node[scale=.7]{0};
\draw (1.5,-.25) node[scale=.7]{3};

\draw (1,.75) node[scale=.7]{5};

\draw (.5,1.75) node[scale=.7]{1};
\draw (.5,2.75) node[scale=.7]{0};

\draw (2,4.25) node[scale=.7]{5};

\draw (2.5,5.25) node[scale=.7]{3};
\draw (1,5.5) node[scale=.7]{1};
 
\draw (1,-1) node{$q'$};

\end{scope}
    \end{tikzpicture}
\end{center}

The surface $q'$ has three cylinders $\mathcal{C}'_1$, $\mathcal{C}'_2$ and $\mathcal{C}'_3$ coming from the cylinders $\mathcal{C}_1$, $\mathcal{C}_2$ and $\mathcal{C}_3$ on $q$ plus an additional cylinder $\mathcal{C}$ of height $\varepsilon$ that was created by the overcollapse of $\mathcal{C}_4$. Notice that for any $i \in \{1,2,3\}$, the circumference of the cylinder $\mathcal{C}'_i$ is the same as the one of $\mathcal{C}_i$, this is because our overcollapse deformation is isoperiodic and the circumference of a cylinder is the period of its core curve. Suppose to a contradiction that the circumferences of the cylinders $\mathcal{C}_1$ and $\mathcal{C}_2$ are not commensurable. It follows from Proposition \ref{twist} that there are two rational numbers $r,s$ such that $c^{-1}_3 = rc^{-1}_1 + sc^{-1}_2$. Denote by $\mu_1,\mu_2$ and $\mu_3$ (resp. $\mu'_1,\mu'_2$ and $\mu'_3$) the moduli of the cylinders $\mathcal{C}_1, \mathcal{C}_2$ and $\mathcal{C}_3$ of $q$ (resp. $\mathcal{C}'_1, \mathcal{C}'_2$ and $\mathcal{C}'_3$ of $q'$). The cylinder decompositions on $q$ and $q'$ are made of relative cylinders whose circumferences are not all pairwise commensurable since $\M$ is nonarithmetic by Proposition \ref{field}. By Proposition \ref{equation}, we know that:

\begin{align*}
    \mu_3 &= -r\mu_1 + s\mu_2 \\
    -\mu'_3 &= -r\mu'_1 + s\mu'_2 
\end{align*}

The important thing to notice here is the minus sign in front of $\mu_3'$ that does not appear in front of $\mu_3$. This is due to that fact that when $\mathcal{C}_1$ overcollapses, the type of $\mathcal{C}_3$ is changed. We also compute $\mu_3' = \mu_3 + c^{-1}_3(h_4 - \varepsilon)$, $\mu_1' = \mu_1 - c^{-1}_1(h_4+\varepsilon)$ and $\mu_2' = \mu_2 + c^{-1}_2(h_4+\varepsilon)$. We thus get: 

\begin{align*}
\mu_3 - \mu'_3 &= -r(\mu_1 + \mu_1') + s(\mu_2 + \mu_2') \\
    &= -2r\mu_1 + 2s\mu_2 + (h_4 + \varepsilon)(rc^{-1}_1 + sc^{-1}_2) \\
    &= 2\mu_3 + c^{-1}_3(h_4 + \varepsilon) \\
\end{align*}

This simplifies to $h_3 + h_4 = 0$, which is a contradiction. We deduce from this that the circumferences of $\mathcal{C}_1$ and $\mathcal{C}_2$ are commensurable and we get another contradiction by Proposition \ref{noneq} as these cylinders do not have same type. It follows that $h_1 \leq h_4$ and using the exact same argument switching the role of $\mathcal{C}_1$ and $\mathcal{C}_4$, we obtain that the cylinders $\mathcal{C}_1$ and $\mathcal{C}_4$ have same height. We conclude as in the previous case using the rank one assumption and the fact that $\mathcal{C}_1$ and $\mathcal{C}_4$ have same type that they are $\M$-entangled.

\textbf{Case 4.4} We claim that $\mathcal{C}_1$ and $\mathcal{C}_2$ are $\M$-entangled. To see this, shear the surface $q$ so that the cylinder $\mathcal{C}_4$ is not crossed by a vertical saddle connection and overcollapse the cylinder $\mathcal{C}_4$ using imaginary rel.  We reach a surface $q'$ whose cylinder decomposition is given by $4.3$, as depicted below: 

\begin{center}
\begin{tikzpicture}
\begin{scope}[scale=.6]

\draw (0,0) rectangle (2,1);
\draw (0,2) rectangle (2,3);
\draw (0,4) rectangle (1,5);
\draw (0,6) -- (-.5,7) -- (.5,7) -- (1,6) -- cycle;

\draw (0,0) node{$\bullet$};
\draw (1,0) node{$\bullet$};
\draw (2,0) node{$\bullet$};

\draw (0,1) node{$\bullet$};
\draw (1,1) node{$\bullet$};
\draw (2,1) node{$\bullet$};

\draw (0,2) node{$\times$};
\draw (1,2) node{$\times$};
\draw (2,2) node{$\times$};

\draw (0,3) node{$\times$};
\draw (1,3) node{$\times$};
\draw (2,3) node{$\times$};

\draw (0,4) node{$\times$};
\draw (1,4) node{$\times$};

\draw (0,5) node{$\bullet$};
\draw (1,5) node{$\bullet$};

\draw (0,6) node{$\bullet$};
\draw (1,6) node{$\bullet$};

\draw (-.5,7) node{$\times$};
\draw (.5,7) node{$\times$};

\draw (.5,.25) node[scale=.7]{0};
\draw (1.5,.25) node[scale=.7]{3};

\draw (.5,.75) node[scale=.7]{5};
\draw (1.5,.75) node[scale=.7]{0};

\draw (.5,2.25) node[scale=.7]{1};
\draw (1.5,2.25) node[scale=.7]{2};

\draw (.5,2.75) node[scale=.7]{1};
\draw (1.5,2.75) node[scale=.7]{4};

\draw (.5,4.25) node[scale=.7]{4};
\draw (.5,4.75) node[scale=.7]{3};

\draw (.5,6.25) node[scale=.7]{5};
\draw (0,6.75) node[scale=.7]{2};

\draw (1,-1) node{$q$};
\draw[>=latex,->](4.5,2.75) -- (6,2.75) ;
\draw (5.25,3.4) node[scale=.6]{overcollapse $\mathcal{C}_4$};

\end{scope}
\begin{scope}[shift={(5,0)},scale=.6]

\draw (0,0) -- (0,1) -- (.5,.75) -- (1,1) -- (2,1) -- (2,0) -- cycle;
\draw (0,2) -- (0,3) -- (2,3) -- (2,2) -- (1.5,2.25) -- (1,2) --  cycle;
\draw (0,4) rectangle (1,6);

\draw (0,0) node{$\bullet$};
\draw (1,0) node{$\bullet$};
\draw (2,0) node{$\bullet$};
\draw (.5,0.75) node{$\times$};

\draw (0,1) node{$\bullet$};
\draw (1,1) node{$\bullet$};
\draw (2,1) node{$\bullet$};
\draw (1.5,2.25) node{$\bullet$};

\draw (0,2) node{$\times$};
\draw (1,2) node{$\times$};
\draw (2,2) node{$\times$};

\draw (0,3) node{$\times$};
\draw (1,3) node{$\times$};
\draw (2,3) node{$\times$};

\draw (0,4) node{$\times$};
\draw (1,4) node{$\times$};

\draw (0,6) node{$\bullet$};
\draw (1,6) node{$\bullet$};

\draw (.5,.25) node[scale=.7]{0};
\draw (1.5,.25) node[scale=.7]{3};

\draw (1.5,.75) node[scale=.7]{0};

\draw (.5,2.25) node[scale=.7]{1};

\draw (.5,2.75) node[scale=.7]{1};
\draw (1.5,2.75) node[scale=.7]{4};

\draw (.5,4.25) node[scale=.7]{4};
\draw (.5,5.75) node[scale=.7]{3};

\draw (1,-1) node{$q'$};

\end{scope}
\end{tikzpicture}
\end{center}

One way to see that the cylinder diagram of $q'$ is given by $4.3$ is to notice that there are four relative cylinders, two of each type, which characterizes uniquely diagram 4.3. The surface $q'$ has three cylinders $\mathcal{C}'_1$, $\mathcal{C}'_2$ and $\mathcal{C}'_3$ coming from the cylinders $\mathcal{C}_1$, $\mathcal{C}_2$ and $\mathcal{C}_3$ on $q$ plus an additional cylinder $\mathcal{C}$ of height $\varepsilon$ that was created by the overcollapse of $\mathcal{C}_4$. Using our analysis of Case 4.3, we deduce that $\mathcal{C}'_1$ and $\mathcal{C}'_2$ have same height. But $h_1 = h_1' -\varepsilon = h_2' - \varepsilon = h_2$ and thus $\mathcal{C}_1$ and $\mathcal{C}_2$ have same height. Since they are absolute, they keep the same height on any nearby surface and thus they are $\M$-entangled.

\textbf{Case 4.1} We claim that this case cannot occur. To see this, shear the surface $q'$ so that $\mathcal{C}_4$ is not crossed by a vertical saddle connection and that the horizontal saddle connection labeled $2$ sits above the singularity $\bullet$ in $\mathcal{C}_4$. Overcollapsing $\mathcal{C}_4$ produces a horizontally periodic surface $q'$ whose cylinder diagram is given by 4.3, as depicted below:

\begin{center}
    \begin{tikzpicture}
    \begin{scope}[scale=.6]

\draw (0,0) rectangle (3,1);
\draw (0,2) rectangle (1,3);
\draw (0,4) rectangle (2,5);
\draw (0,6) -- (.5,7) -- (2.5,7) -- (2,6) --cycle;

\draw (0,0) node{$\bullet$};
\draw (1,0) node{$\bullet$};
\draw (3,0) node{$\bullet$};

\draw (0,1) node{$\bullet$};
\draw (2,1) node{$\bullet$};
\draw (3,1) node{$\bullet$};

\draw (0,2) node{$\times$};
\draw (1,2) node{$\times$};

\draw (0,3) node{$\times$};
\draw (1,3) node{$\times$};

\draw (0,4) node{$\times$};
\draw (1,4) node{$\times$};
\draw (2,4) node{$\times$};

\draw (0,5) node{$\bullet$};
\draw (2,5) node{$\bullet$};

\draw (0,6) node{$\bullet$};
\draw (2,6) node{$\bullet$};

\draw (0.5,7) node{$\times$};
\draw (1.5,7) node{$\times$};
\draw (2.5,7) node{$\times$};

\draw (.5,.25) node[scale=.7]{0};
\draw (2,.25) node[scale=.7]{1};

\draw (1,.75) node[scale=.7]{5};
\draw (2.5,.75) node[scale=.7]{0};

\draw (.5,2.25) node[scale=.7]{2};
\draw (.5,2.75) node[scale=.7]{4};

\draw (.5,4.25) node[scale=.7]{3};
\draw (1.5,4.25) node[scale=.7]{4};

\draw (1,4.75) node[scale=.7]{1};

\draw (1,6.25) node[scale=.7]{5};

\draw (1,6.75) node[scale=.7]{3};
\draw (2,6.75) node[scale=.7]{2};

\draw (2,-1) node{$q$};
\draw[>=latex,->](4.5,2.75) -- (6,2.75) ;
\draw (5.25,3.4) node[scale=.6]{overcollapse $\mathcal{C}_4$};

\end{scope}

    \begin{scope}[shift={(5,0)},scale=.6]

\draw (0,0) -- (0,1) -- (.5,.75) -- (1.5,.75) -- (2,1) -- (3,1) -- (3,0) -- cycle;
\draw (0,2) -- (0,3) -- (1,3) -- (1,2) -- (.5,2.25) -- cycle;
\draw (0,4) rectangle (2,6);

\draw (0,0) node{$\bullet$};
\draw (1,0) node{$\bullet$};
\draw (3,0) node{$\bullet$};

\draw (0,1) node{$\bullet$};
\draw (2,1) node{$\bullet$};
\draw (3,1) node{$\bullet$};
\draw (.5,.75) node{$\times$};
\draw (1.5,.75) node{$\times$};

\draw (0,2) node{$\times$};
\draw (1,2) node{$\times$};
\draw (.5,2.25) node{$\bullet$};

\draw (0,3) node{$\times$};
\draw (1,3) node{$\times$};

\draw (0,4) node{$\times$};
\draw (1,4) node{$\times$};
\draw (2,4) node{$\times$};

\draw (0,6) node{$\bullet$};
\draw (2,6) node{$\bullet$};

\draw (.5,.25) node[scale=.7]{0};

\draw (2,.25) node[scale=.7]{1};

\draw (2.5,.75) node[scale=.7]{0};

\draw (.5,2.75) node[scale=.7]{4};

\draw (.5,4.25) node[scale=.7]{3};
\draw (1.5,4.25) node[scale=.7]{4};

\draw (1,5.75) node[scale=.7]{1};

\draw (2,-1) node{$q'$};

\end{scope}
    \end{tikzpicture}
\end{center}

The surface $q'$ has three cylinders $\mathcal{C}'_1$, $\mathcal{C}'_2$ and $\mathcal{C}'_3$, coming from the cylinders $\mathcal{C}_1$, $\mathcal{C}_2$ and $\mathcal{C}_3$ on $q$ plus an additional cylinder $\mathcal{C}$ of height $\varepsilon$ that was created by the overcollapse of $\mathcal{C}_1$. Using our analysis of Case 4.3 we know that $\mathcal{C}$ and $\mathcal{C}_3'$ should be $\M$-entangled but the height of $\mathcal{C}$ is $\varepsilon$ while the one of $\mathcal{C}'_3$ is $h_3 + h_4 + \varepsilon$. This is a contradiction which rules out Case 4.1. 

\textbf{Case 4.5} We claim that $\mathcal{C}_3$ and $\mathcal{C}_4$ are $\M$-entangled. To see this, we first prove that $\mathcal{C}_3$ and $\mathcal{C}_4$ must have same circumference. Suppose to a contradiction that $c_3>c_4$ and shear the surface $q$ so that the midpoint of the saddle connection labeled $3$ in $\mathcal{C}_1$ sits above the midpoint of the saddle connection labeled $0$ and overcollapse $\mathcal{C}_1$ to reach a horizontally periodic surface $q'$ as follows: 

\begin{center}
    \begin{tikzpicture}
    \begin{scope}[scale=.6]

\draw (0.5,0) -- (0,1) -- (2,1) -- (2.5,0) -- cycle;
\draw (0,2) rectangle (2,3);
\draw (0,4) rectangle (1.5,5);
\draw (0,6) rectangle (.5,7);

\draw (0.5,0) node{$\bullet$};
\draw (1,0) node{$\bullet$};
\draw (2.5,0) node{$\bullet$};

\draw (0,1) node{$\times$};
\draw (1.5,1) node{$\times$};
\draw (2,1) node{$\times$};

\draw (0,2) node{$\times$};
\draw (.5,2) node{$\times$};
\draw (2,2) node{$\times$};

\draw (0,3) node{$\bullet$};
\draw (.5,3) node{$\bullet$};
\draw (2,3) node{$\bullet$};

\draw (0,4) node{$\bullet$};
\draw (1.5,4) node{$\bullet$};

\draw (0,5) node{$\bullet$};
\draw (1.5,5) node{$\bullet$};

\draw (0,6) node{$\times$};
\draw (.5,6) node{$\times$};

\draw (0,7) node{$\times$};
\draw (.5,7) node{$\times$};

\draw (.75,.25) node[scale=.7]{5};
\draw (1.75,.25) node[scale=.7]{0};

\draw (.75,.75) node[scale=.7]{4};
\draw (1.75,.75) node[scale=.7]{3};

\draw (.25,2.25) node[scale=.7]{1};
\draw (1.25,2.25) node[scale=.7]{4};

\draw (.25,2.75) node[scale=.7]{5};
\draw (1.25,2.75) node[scale=.7]{2};

\draw (.75,4.25) node[scale=.7]{2};
\draw (.75,4.75) node[scale=.7]{0};

\draw (.25,6.25) node[scale=.7]{3};
\draw (.25,6.75) node[scale=.7]{1};

\draw (1,-1) node{$q$};
\draw[>=latex,->](4.5,2.75) -- (6,2.75) ;
\draw (5.25,3.4) node[scale=.6]{overcollapse $\mathcal{C}_1$};

\end{scope}

    \begin{scope}[shift={(5,0)},scale=.6]
\draw (0,1) -- (.5,1) -- (1,1.25) -- (1.5,1.25) -- (2,1) -- (2,3) -- (0,3) --cycle;
\draw (0,4) -- (1.5,4) -- (1.5,5) -- (1,4.75) -- (.5,4.75) -- (0,5) -- cycle;
\draw (0,6) rectangle (.5,7);

\draw (0,1) node{$\times$};
\draw (.5,1) node{$\times$};
\draw (2,1) node{$\times$};

\draw (1,1.25) node{$\bullet$};
\draw (1.5,1.25) node{$\bullet$};

\draw (0,3) node{$\bullet$};
\draw (.5,3) node{$\bullet$};
\draw (2,3) node{$\bullet$};

\draw (0,4) node{$\bullet$};
\draw (1.5,4) node{$\bullet$};

\draw (0,5) node{$\bullet$};
\draw (1.5,5) node{$\bullet$};

\draw (.5,4.75) node{$\times$};
\draw (1,4.75) node{$\times$};

\draw (0,6) node{$\times$};
\draw (.5,6) node{$\times$};

\draw (0,7) node{$\times$};
\draw (.5,7) node{$\times$};

\draw (.25,1.25) node[scale=.7]{1};
\draw (1.25,2.25) node[scale=.7]{4};

\draw (.25,2.75) node[scale=.7]{5};
\draw (1.25,2.75) node[scale=.7]{2};

\draw (.75,4.25) node[scale=.7]{2};

\draw (.25,6.25) node[scale=.7]{3};
\draw (.25,6.75) node[scale=.7]{1};

\draw (1,-1) node{$q'$};
\end{scope}

    \end{tikzpicture}
\end{center}

It is easily seen that $q'$ has a horizontal cylinder decomposition as in Case 4.1, which is in contradiction with what was proved earlier and thus $c_3 \leq c_4$. We prove likewise that $c_4\leq c_3$ and this proves our first claim that $c_3 = c_4$. Now, shear the surface $q$ so that $\mathcal{C}_1$ is not crossed by a vertical saddle connection and that in $\mathcal{C}_1$, the midpoint of the horizontal saddle connection labeled 0 sits below the endpoint of the saddle connection labeled 4 (which is also the beginning of the saddle connection labeled 3). Overcollapsing cylinder $\mathcal{C}_1$ using imaginary rel produces a surface $q''$ whose cylinder diagram is given by $4.2$, as depicted below: 

\begin{center}
    \begin{tikzpicture}
    \begin{scope}[scale=.6]

\draw (0.5,0) -- (0,1) -- (2,1) -- (2.5,0) -- cycle;
\draw (0,2) rectangle (2,3);
\draw (0,4) rectangle (1,5);
\draw (0,6) rectangle (1,7);

\draw (0.5,0) node{$\bullet$};
\draw (1.5,0) node{$\bullet$};
\draw (2.5,0) node{$\bullet$};

\draw (0,1) node{$\times$};
\draw (1,1) node{$\times$};
\draw (2,1) node{$\times$};

\draw (0,2) node{$\times$};
\draw (1,2) node{$\times$};
\draw (2,2) node{$\times$};

\draw (0,3) node{$\bullet$};
\draw (1,3) node{$\bullet$};
\draw (2,3) node{$\bullet$};

\draw (0,4) node{$\bullet$};
\draw (1,4) node{$\bullet$};

\draw (0,5) node{$\bullet$};
\draw (1,5) node{$\bullet$};

\draw (0,6) node{$\times$};
\draw (1,6) node{$\times$};

\draw (0,7) node{$\times$};
\draw (1,7) node{$\times$};

\draw (1,.25) node[scale=.7]{0};
\draw (2,.25) node[scale=.7]{5};

\draw (.5,.75) node[scale=.7]{4};
\draw (1.5,.75) node[scale=.7]{3};

\draw (.5,2.25) node[scale=.7]{1};
\draw (1.5,2.25) node[scale=.7]{4};

\draw (.5,2.75) node[scale=.7]{5};
\draw (1.5,2.75) node[scale=.7]{2};

\draw (.5,4.25) node[scale=.7]{2};
\draw (.5,4.75) node[scale=.7]{0};

\draw (.5,6.25) node[scale=.7]{3};
\draw (.5,6.75) node[scale=.7]{1};

\draw (1,-1) node{$q$};
\draw[>=latex,->](4.5,2.75) -- (6,2.75) ;
\draw (5.25,3.4) node[scale=.6]{overcollapse $\mathcal{C}_1$};

\end{scope}

    \begin{scope}[shift={(5,0)},scale=.6]

\draw (0,1) -- (0,3) -- (.5,2.75) -- (1,3) -- (2,3) -- (2,1) -- (1.5,1.25) -- (1,1) -- cycle;
\draw (0,4) -- (0,5) -- (.5,4.75) -- (1,5) -- (1,4) -- cycle;
\draw (0,6) -- (0,7) -- (1,7) -- (1,6) -- (.5,6.25) -- cycle;

\draw (0,1) node{$\times$};
\draw (1,1) node{$\times$};
\draw (2,1) node{$\times$};
\draw (1.5,1.25) node{$\bullet$};

\draw (0,3) node{$\bullet$};
\draw (1,3) node{$\bullet$};
\draw (2,3) node{$\bullet$};
\draw (.5,2.75) node{$\times$};

\draw (0,4) node{$\bullet$};
\draw (1,4) node{$\bullet$};
\draw (.5,4.75) node{$\times$};

\draw (0,5) node{$\bullet$};
\draw (1,5) node{$\bullet$};

\draw (0,6) node{$\times$};
\draw (1,6) node{$\times$};
\draw (.5,6.25) node{$\bullet$};
\draw (0,7) node{$\times$};
\draw (1,7) node{$\times$};

\draw (.5,1.25) node[scale=.7]{1};

\draw (1.5,2.75) node[scale=.7]{2};

\draw (.5,4.25) node[scale=.7]{2};

\draw (.5,6.75) node[scale=.7]{1};

\draw (1,-1) node{$q''$};
\end{scope}

    \end{tikzpicture}
\end{center}

The surface $q''$ has three cylinders $\mathcal{C}'_2$, $\mathcal{C}'_3$ and $\mathcal{C}'_4$ coming from the cylinders $\mathcal{C}_2$, $\mathcal{C}_3$ and $\mathcal{C}_4$ on $q$ plus an additional cylinder $\mathcal{C}$ of height $\varepsilon$ that was created by the overcollapse of $\mathcal{C}_1$. Using our analysis of Case 4.2, we know that two of the cylinders $\mathcal{C}'_2$, $\mathcal{C}'_3$ and $\mathcal{C}'_4$ are $\M$-entangled. We compute that the height of $\mathcal{C}'_2$ is $h_2+h_1-\varepsilon$, the one of $\mathcal{C}'_3$ is $h_3 - \varepsilon$ and the one of $\mathcal{C}'_4$ is $h_4 - \varepsilon$. Suppose to a contradiction that $\mathcal{C}'_2$ is $\M$-entangled to $\mathcal{C}'_3$. It follows from Corollary \ref{height} that their circumferences, which are $c_2$ and $c_3$, are commensurable. The core curves of $\mathcal{C}_1$ and $\mathcal{C}_2$ are homologous so $c_1 = c_2$ and since $c_3 = c_4$ as proved earlier, we deduce that the circumferences of the horizontal cylinders on $q$ are pairwise commensurable, which is a contradiction with the fact that $\M$ is nonarithmetic by Proposition \ref{field}. We prove likewise that $\mathcal{C}'_2$ is not $\M$-entangled to $\mathcal{C}'_4$ and thus we have proved that $\mathcal{C}'_3$ and $\mathcal{C}'_4$ are $\M$-entangled. But $h_3 = h_3'+\varepsilon = h_4'+\varepsilon = h_4$ and thus $\mathcal{C}_3$ and $\mathcal{C}_4$ have same height. Since they are absolute, they keep the same height on any nearby surface and thus they are $\M$-entangled.

\end{proof}

\textbf{Remark.} Lemma 6.17 in \cite{aulicino2016rank} states that Case 4.1 does not occur as the cylinder diagram of a surface in a rank 2 affine invariant orbifold either. This should be reassuring in the perspective of establishing Theorem \ref{T1} since any connected component of $\Omega E_D^{odd}(2,2)$ lives in a rank two affine invariant orbifold, namely the Prym locus.

\begin{prop}\label{reduction}
Let $\M$ be a rel-invariant nonarithmetic affine invariant orbifold in $\mathcal{H}^{odd}(2,2)$. It contains a horizontally periodic surface whose cylinder diagram is given by 4.2 or 4.3 in the notation of Appendix A. 
\end{prop}

\bigskip 

\begin{proof}

\begin{center}
\begin{tikzpicture}

\begin{scope}[scale=.6]

\draw (1,0) -- (4,0) -- (3,1) -- (0,1) -- cycle;
\draw (0,2) rectangle (3,3);
\draw (0,4) rectangle (1,5);
\draw[fill=black, opacity=.3] (1,0) rectangle (2,1);
\draw[fill=black, opacity=.3] (2,2) rectangle (3,3);

\draw (1,0) node{$\bullet$};
\draw (2,0) node{$\bullet$};
\draw (3,0) node{$\bullet$};
\draw (4,0) node{$\bullet$};

\draw (0,1) node{$\times$}; 
\draw (2,1) node{$\times$}; 
\draw (3,1) node{$\times$}; 

\draw (0,2) node{$\times$}; 
\draw (1,2) node{$\times$}; 
\draw (3,2) node{$\times$}; 

\draw (0,3) node{$\bullet$};
\draw (1,3) node{$\bullet$};
\draw (2,3) node{$\bullet$};
\draw (3,3) node{$\bullet$};

\draw (0,4) node{$\times$};
\draw (1,4) node{$\times$};

\draw (0,5) node{$\times$};
\draw (1,5) node{$\times$};

\draw (1.5,.25) node[scale=.7]{0};
\draw (2.5,.25) node[scale=.7]{2};
\draw (3.5,.25) node[scale=.7]{1};

\draw (1,.75) node[scale=.7]{5};
\draw (2.5,.75) node[scale=.7]{4};

\draw (.5,2.25) node[scale=.7]{3};
\draw (2,2.25) node[scale=.7]{5};

\draw (.5,2.75) node[scale=.7]{1};
\draw (1.5,2.75) node[scale=.7]{2};
\draw (2.5,2.75) node[scale=.7]{0};

\draw (.5,4.25) node[scale=.7]{4};
\draw (.5,4.75) node[scale=.7]{3};

\draw (1.5,-1) node{The vertical cylinder $\mathcal{C}$ depicted in grey};
\end{scope}    

\end{tikzpicture}
\end{center}

It follows from Proposition \ref{periodicity} that the surface $q$ is vertically periodic. Applying a well-chosen real rel deformation, small enough so that none of the vertical cylinders of $q$ collapses creates a new surface $q'$ that is still vertically periodic, that the corresponding vertical cylinders are all stable (applying real rel breaks the vertical saddle connection joining different singularities) and that the cylinder coming coming from $\mathcal{C}$ is now simple (which means that each of its boundary components is a single saddle connection) and relative, as depicted in the following picture: 

\begin{center}
\begin{tikzpicture}

\begin{scope}[scale=.6]

\draw (1.5,0) -- (4.5,0) -- (3.5,1) -- (0,1) -- cycle;
\draw (0,2) -- (3,2) -- (3.5,3) -- (.5,3) -- cycle;
\draw (0,4) rectangle (1,5);
\draw[fill=black, opacity=.3] (1.5,0) rectangle (2,1);
\draw[fill=black, opacity=.3] (2.5,2) rectangle (3,3);

\draw (1.5,0) node{$\bullet$};
\draw (2.5,0) node{$\bullet$};
\draw (3.5,0) node{$\bullet$};
\draw (4.5,0) node{$\bullet$};

\draw (0,1) node{$\times$}; 
\draw (2,1) node{$\times$}; 
\draw (3,1) node{$\times$}; 

\draw (0,2) node{$\times$}; 
\draw (1,2) node{$\times$}; 
\draw (3,2) node{$\times$}; 

\draw (0.5,3) node{$\bullet$};
\draw (1.5,3) node{$\bullet$};
\draw (2.5,3) node{$\bullet$};
\draw (3.5,3) node{$\bullet$};

\draw (0,4) node{$\times$};
\draw (1,4) node{$\times$};

\draw (0,5) node{$\times$};
\draw (1,5) node{$\times$};

\draw (1.5,.25) node[scale=.7]{0};
\draw (2.5,.25) node[scale=.7]{2};
\draw (3.5,.25) node[scale=.7]{1};

\draw (1,.75) node[scale=.7]{5};
\draw (2.5,.75) node[scale=.7]{4};

\draw (.5,2.25) node[scale=.7]{3};
\draw (2,2.25) node[scale=.7]{5};

\draw (.5,2.75) node[scale=.7]{1};
\draw (1.5,2.75) node[scale=.7]{2};
\draw (2.5,2.75) node[scale=.7]{0};

\draw (.5,4.25) node[scale=.7]{4};
\draw (.5,4.75) node[scale=.7]{3};

\draw (1.5,-1) node{A simple and relative vertical cylinder};

\end{scope}    

\end{tikzpicture}
\end{center}

Having only stable cylinders, one of which being simple and relative is a feature shared only by diagrams with four cylinders. Up to rotating by 90 degrees, we can assume that $q'$ has four horizontal cylinders.  Finally, we saw in the course of the proof of Proposition \ref{atmost} that the corresponding cylinder diagram cannot be given by Case 4.1 and that the Cases 4.4 and 4.5 can always be deformed inside $\M$ into a surface with a cylinder diagram given by Cases 4.3 or 4.2. 
\end{proof}

\begin{thm}[Theorem A]
Let $\mathcal{M}$ be a rel-invariant rank one affine invariant orbifold in $\mathcal{H}^{odd}(2,2)$. If $\M$ is nonarithmetic, then there exists $D$ not a square such that $\M$ is a connected component of $\Omega E_D^{odd}(2,2)$.  
\end{thm}

\begin{proof}
Let $q \in \M$ be a horizontally periodic surface whose cylinder diagram is given by 4.2 or 4.3 in the notation of Appendix A. Such a surface is given by Proposition \ref{reduction} and we recall that we number the cylinders from bottom to top. We analyse the two cases separately: 

\textbf{Case 4.2} By Proposition \ref{atmost}, we can assume without loss of generality that $\mathcal{C}_2$ and $\mathcal{C}_3$ are $\M$-entangled. We will show that they are actually isometric. Assume first that $c_2 \geq c_3$ and shear the surface using real rel and the horocycle flow so that $q$ has a vertical cylinder $\mathcal{C}$ that has the core curve of the horizontal cylinder $\mathcal{C}_2$ as a cross curve, as depicted in the following picture:

\begin{center}
    \begin{tikzpicture}
    \begin{scope}[scale = .6]

\draw (0,0) rectangle (3,1);
\draw (2,0) rectangle (3,1);
\draw[fill=black, opacity=.3] (2,0) rectangle (3,1);
\draw[fill=black, opacity=.3] (0,2) rectangle (1,3);
\draw (0,2) rectangle (1,3);
\draw (0,4) -- (.5,5) -- (1.5,5) -- (1,4) -- cycle;
\draw (0,6) -- (-.5,7) -- (.5,7) -- (1,6) -- cycle;

\draw (0,0) node{$\bullet$}; 
\draw (1,0) node{$\bullet$}; 
\draw (2,0) node{$\bullet$}; 
\draw (3,0) node{$\bullet$}; 

\draw (0,1) node{$\times$}; 
\draw (1,1) node{$\times$}; 
\draw (2,1) node{$\times$}; 
\draw (3,1) node{$\times$}; 

\draw (0,2) node{$\times$};
\draw (1,2) node{$\times$};

\draw (0,3) node{$\bullet$};
\draw (1,3) node{$\bullet$};

\draw (0,4) node{$\times$};
\draw (1,4) node{$\times$};

\draw (0.5,5) node{$\bullet$};
\draw (1.5,5) node{$\bullet$};

\draw (0,6) node{$\times$};
\draw (1,6) node{$\times$};

\draw (-.5,7) node{$\bullet$};
\draw (.5,7) node{$\bullet$};

\draw (.5,.25) node[scale=.7]{0};
\draw (1.5,.25) node[scale=.7]{2};
\draw (2.5,.25) node[scale=.7]{1};

\draw (.5,.75) node[scale=.7]{5};
\draw (1.5,.75) node[scale=.7]{4};
\draw (2.5,.75) node[scale=.7]{3};

\draw (.5,2.25) node[scale=.7]{3};
\draw (.5,2.75) node[scale=.7]{1};

\draw (.5,4.25) node[scale=.7]{4};
\draw (1,4.75) node[scale=.7]{2};

\draw (.5,6.25) node[scale=.7]{5};
\draw (0,6.75) node[scale=.7]{0};

\draw (1.5,-1) node{The vertical cylinder $\mathcal{C}$ in grey};

\end{scope}
    \end{tikzpicture}
\end{center}

It follows from Proposition \ref{periodicity} that the vertical direction is periodic and thus the surface $q$ is decomposed into vertical cylinders. In particular, the length of any horizontal saddle connection is a linear combination of the heights of the vertical cylinders, the coefficients being given by the intersection numbers of the saddle connection with the core curves of the vertical cylinders. We claim that there are two positive numbers $l_1$ and $l_2$ such that the height of any of the vertical cylinders on $q$ is either $l_1$ or $l_2$. Suppose to a contradiction that there are at least three vertical cylinders on $q$ with pairwise distinct heights. Notice that we can apply real rel to $q$ to create a new vertical cylinder with arbitrarily small height adjacent to $\mathcal{C}$ without collapsing the other three cylinders (though their height might have changed). See Figure 4.1 in \cite{apisa2017rank} for a depiction of that phenomenon. In particular, this would create a surface in $\M$ with four cylinders with pairwise distinct height, in contradiction with Proposition \ref{atmost}. This proves our claim. Notice that $l_1$ and $l_2$ are not commensurable as otherwise all the circumferences of the horizontal cylinders, which are linear combinations of height of vertical cylinders, would be commensurable and $\M$ would be arithmetic by Proposition \ref{field}. Without lost of generality, we can assume that the height of $\mathcal{C}$ is $l_1$, so $l_1 = c_2$. We can find integers $m,n$ such that $c_3 = m l_1 + n l_2$. Notice that all the horizontal cylinders of $q$ are relative and so it follows from Proposition \ref{field} that we can apply Corollary \ref{height} to $\mathcal{C}_2$ and $\mathcal{C}_3$: since they have same type and same height, we know that $c_2$ and $c_3$ are commensurable. This is only possible if $n=0$ and because we assumed that $c_2 \geq c_3$ we also deduce that $m=1$. In particular $c_2 = c_3$ and the core curve of $\mathcal{C}_3$ is the cross curve a vertical cylinder of height $l_1 = c_2$. This proves that $\mathcal{C}_2$ and $\mathcal{C}_3$ are isometric. Let $\lambda$ be the involution that exchanges $\mathcal{C}_2$ and $\mathcal{C}_3$ while fixing $\mathcal{C}_1$ and $\mathcal{C}_4$ and such that $\lambda^{\ast}\omega_q = -\omega_q$. It has four fixed points located on the core curves of $\mathcal{C}_1$ and $\mathcal{C}_4$ and thus by Riemann-Hurwitz the surface $X_q / \lambda$ has genus 1, which proves that $\lambda$ is a Prym involution. That involution persists under small rel deformations and under the $\GL$-action, which means that any nearby surface is also endowed with a Prym involution. We can thus conclude from \cite[Theorem 7.3]{filip2016semisimplicity} that $\M$ is contained in $\Omega E^{odd}_D(2,2)$ with $D$ not a square and by a dimension count that $\M$ coincides with a connected component of $\Omega E_D(2,2)$. Alternatively, we can conclude in a more elementary way as follows: an application of the Closing Lemma for the geodesic flow that appears, for instance, in Lemma 4.1 of \cite{wright2014field} together with Birkhoff Ergodic Theorem applied to a compact set of large measure, shows that any neighborhood of $q$ contains a surface fixed by a hyperbolic matrix $A$. Up to replacing $q$ by this nearby surface, we conclude from Theorem 3.5 in \cite{mcmullen2007prym} that $q$ is a Prym eigenform in $\Omega E^{odd}_D(2,2)$ with $\mathbb{Q}(\sqrt{D})= \mathbb{Q}(tr(A))$. Notice that if $D$ is a square, then $tr(A)$ is rational and $q$ would cover a torus by Theorem 9.8 in \cite{mcmullen2003teichmuller} and thus $\M$ would be arithmetic. So $D$ is not a square and up to replacing $q$ by a nearby surface whose $\GL$-orbit closure is $\M$, we can conclude that $\M$ is a connected component of $\Omega E^{odd}_D(2,2)$. In the case where $c_2 \leq c_3$ we proceed exactly in the same way but this time shearing the surface $q$ so that the core curve of $\mathcal{C}_3$ is the cross curve of a vertical cylinder and we show again that $\mathcal{C}_2$ and $\mathcal{C}_3$ are isometric. 

\textbf{Case 4.3} By Proposition \ref{atmost}, we know that $\mathcal{C}_1$ and $\mathcal{C}_4$ are $\M$-entangled and we will show that they are actually isometric. Without loss of generality, we can assume that $c_4 \geq c_1$ and shear the surface $q$ using the real rel flow and the horocycle flow such that it has a vertical cylinder $\mathcal{C}$  with the horizontal saddle connection labeled 2 as a cross curve, as depicted in the following picture:

\begin{center}
    \begin{tikzpicture}
    \begin{scope}[scale=.6]

\draw (0,0) -- (.5,1) -- (2.5,1) -- (2,0) -- cycle;
\draw (0,2) -- (-.5,3) -- (.5,3) -- (1,2) -- cycle;
\draw[fill=black, opacity=.3] (0,4) rectangle (1,5);
\draw[fill=black, opacity=.3] (0,6) rectangle (1,7);
\draw (0,4) rectangle (3,5);
\draw (0,6) rectangle (2,7);

\draw (0,0) node{$\bullet$};
\draw (1,0) node{$\bullet$};
\draw (2,0) node{$\bullet$};

\draw (0.5,1) node{$\times$};
\draw (2.5,1) node{$\times$};

\draw (0,2) node{$\times$};
\draw (1,2) node{$\times$};

\draw (-.5,3) node{$\bullet$};
\draw (.5,3) node{$\bullet$};

\draw (0,4) node{$\times$};
\draw (1,4) node{$\times$};
\draw (3,4) node{$\times$};

\draw (0,5) node{$\bullet$};
\draw (2,5) node{$\bullet$};
\draw (3,5) node{$\bullet$};

\draw (0,6) node{$\bullet$};
\draw (2,6) node{$\bullet$};

\draw (0,7) node{$\times$};
\draw (1,7) node{$\times$};
\draw (2,7) node{$\times$};

\draw (.5,.25) node[scale=.7]{0};
\draw (1.5,.25) node[scale=.7]{3};

\draw (1.5,.75) node[scale=.7]{5};

\draw (.5,2.25) node[scale=.7]{1};
\draw (0,2.75) node[scale=.7]{0};

\draw (.5,4.25) node[scale=.7]{2};
\draw (2,4.25) node[scale=.7]{5};

\draw (1,4.75) node[scale=.7]{4};
\draw (2.5,4.75) node[scale=.7]{3};

\draw (1,6.25) node[scale=.7]{4};

\draw (.5,6.75) node[scale=.7]{2};
\draw (1.5,6.75) node[scale=.7]{1};
 
\draw (1,-1) node{The vertical cylinder $\mathcal{C}$ in grey};

\end{scope}
    \end{tikzpicture}
\end{center}

It follows from Proposition \ref{periodicity} that the vertical direction is periodic. We show as in the previous case that there are positive numbers $l_1$ and $l_2$ such that the height of any of the corresponding vertical cylinders is equal to either $l_1$ or $l_2$. Here again, it is not the case these two numbers are commensurable and we assume that the height of $\mathcal{C}$ is $l_1$, so the horizontal saddle connection labeled $2$ has length $l_1$. We can find integers $m,n$ such that the length of the horizontal saddle connection labeled 3 is $ml_1 + nl_2$. Since the saddle connections labeled $1$ and $0$ have same length and we assumed $c_4 \geq c_1$, we either have $m=1$ and $n=0$ or $m=0$. Assume to a contradiction that $m=0$ and let $a,b$ be two non-negative integers such that the saddle connection labeled $1$ has length $al_1 + bl_2$. Since $\mathcal{C}_1$ and $\mathcal{C}_4$ have same type and same height, it follows again from Corollary \ref{height} that there is $r \in \mathbb{Q}$ such that $c_4 = r c_1$. Using again the fact that the horizontal saddle connections labeled 1 and 0 have same length, this equation can be written: 

\begin{equation*}
    l_1 + al_1 + bl_2 = r(al_1 + bl_2 + nl_2)
\end{equation*}

Since $l_1$ and $l_2$ are not commensurable, we would get that $r = \frac{1+a}{a} = \frac{b}{b+n}$ which is impossible since $a,b$ and $n$ are non-negative integers. We thus have $m=1$ and $n=0$. This means that the saddle connection labeled $3$ is the cross curve of a vertical cylinder $\mathcal{C}'$ of height $l_1$. We claim that the cylinders $\mathcal{C}$ and $\mathcal{C}'$ are $\M$-entangled. Indeed, let $\mathcal{C}''$ be a vertical cylinder on $q$ of height $l_2$. Let $L = max(l_1,l_2)$, let $l = min(l_1,l_2)$, let $|\varepsilon| < \frac{1}{2}min(l_1,l_2,L-l)$ and let $q_{\varepsilon} = Rel^{\varepsilon}_v(q)$. The upper bound on $\varepsilon$ ensures that $q_{\varepsilon}$ has three vertical cylinders $\mathcal{C}_{\varepsilon}$, $\mathcal{C}'_{\varepsilon}$ and $\mathcal{C}''_{\varepsilon}$ of height strictly greater than $\varepsilon$ coming from $\mathcal{C}$, $\mathcal{C}'$ and $\mathcal{C}''$ plus an additional one of height $\varepsilon$. It also implies that the height of $\mathcal{C}_{\varepsilon}''$ is distinct from that of $\mathcal{C}_{\varepsilon}$ and $\mathcal{C}_{\varepsilon}'$. It follows from Proposition \ref{atmost} that $\mathcal{C}_{\varepsilon}$ and $\mathcal{C}_{\varepsilon}'$ have same height. Since $\varepsilon$ is arbitrary, this proves our claim that $\mathcal{C}$ and $\mathcal{C}'$ are $\M$-entangled. Let now $|\eta| < \frac{1}{2}min(l_1,l_2,L-l)$ such that on $q_{\eta}$ the cylinder $\mathcal{C}_{\eta}$ is stable and relative:

\begin{center}
    \begin{tikzpicture}
    \begin{scope}[scale=.6]

\draw (0,0) -- (.5,1) -- (2.5,1) -- (2,0) -- cycle;
\draw (0,2) -- (-.5,3) -- (.5,3) -- (1,2) -- cycle;
\draw[fill=black, opacity=.3] (0.5,4) rectangle (1,5);
\draw[fill=black, opacity=.3] (0.5,6) rectangle (1,7);
\draw (0,4) -- (.5,5) -- (3.5,5) -- (3,4) -- cycle;
\draw (0.5,6) -- (0,7) -- (2,7) -- (2.5,6) -- cycle;

\draw (0,0) node{$\bullet$};
\draw (1,0) node{$\bullet$};
\draw (2,0) node{$\bullet$};

\draw (0.5,1) node{$\times$};
\draw (2.5,1) node{$\times$};

\draw (0,2) node{$\times$};
\draw (1,2) node{$\times$};

\draw (-.5,3) node{$\bullet$};
\draw (.5,3) node{$\bullet$};

\draw (0,4) node{$\times$};
\draw (1,4) node{$\times$};
\draw (3,4) node{$\times$};

\draw (0.5,5) node{$\bullet$};
\draw (2.5,5) node{$\bullet$};
\draw (3.5,5) node{$\bullet$};

\draw (0.5,6) node{$\bullet$};
\draw (2.5,6) node{$\bullet$};

\draw (0,7) node{$\times$};
\draw (1,7) node{$\times$};
\draw (2,7) node{$\times$};

\draw (.5,.25) node[scale=.7]{0};
\draw (1.5,.25) node[scale=.7]{3};

\draw (1.5,.75) node[scale=.7]{5};

\draw (.5,2.25) node[scale=.7]{1};
\draw (0,2.75) node[scale=.7]{0};

\draw (.5,4.25) node[scale=.7]{2};
\draw (2,4.25) node[scale=.7]{5};

\draw (1.5,4.75) node[scale=.7]{4};
\draw (3,4.75) node[scale=.7]{3};

\draw (1.5,6.25) node[scale=.7]{4};

\draw (.5,6.75) node[scale=.7]{2};
\draw (1.5,6.75) node[scale=.7]{1};
 
\draw (1,-1) node{$\mathcal{C}_{\eta}$ is relative on $q_{\eta}$};
\end{scope}
    \end{tikzpicture}
\end{center}

The cylinder $\mathcal{C}_{\eta}'$ is also relative with same type as $\mathcal{C}_{\eta}$ on $q_{\eta}$ as otherwise the two cylinders could not keep same height when $\eta$ varies. By Corollary \ref{height} this means that the circumferences of $\mathcal{C}_{\eta}$ and $\mathcal{C}'_{\eta}$ are commensurable and thus so are the circumferences of $\mathcal{C}$ and $\mathcal{C}'$. The circumference of $\mathcal{C}$ is $h_1 + h_3$ and notice that the circumference of $\mathcal{C}'$ is equal to $s(h_1 + h_2) + (s+1)(h_1 + h_3)$ where $s$ is the number of time the core curve of $\mathcal{C}'$ intersects the horizontal saddle connection labeled $1$. This is best seen with a carefully chosen basis of $H_1(X_q,\mathbb{Z})$: let $e_1$ be the homology class represented by the saddle connection labeled $1$, $e_2$ the one represented by the saddle connection $2$, $e_3$ the one represented by the saddle connection labeled $3$, $f_1$ the one represented by a cycle that crosses all the horizontal cylinder exactly once, $f_2$ the one represented by the core curve of $\mathcal{C}$ and finally $f_3$ the homology class represented by a cycle that intersects the saddle connection labeled $3$ and that crosses $\mathcal{C}_1$ and $\mathcal{C}_3$ exactly once. This basis $(e_i,f_i)_{i \in \{1,2,3\}}$ is symplectic with respect to the intersection product on $H_1(X_q,\mathbb{Z})$ and thus the cohomology class in $H^1(X_q,\mathbb{C})$ given by integration of the imaginary part of $\omega_q$ is equal to $\sum_{i=1}^3( \int_{f_i} \mathcal{I}m(\omega_q) ) e_i^{\ast} = (h_1+h_2+h_3+h_4)e_1^{\ast}$ + $(h_3+h_4) e_2^{\ast} + (h_1+h_3)e_3^{\ast}$ where $e_i^{\ast}$ denote the intersection product against $e_i$ and $h_1 = h_4$ since $\mathcal{C}_1$ and $\mathcal{C}_4$ are $\M$-entangled. Given that the core curve $\gamma'$ of $\mathcal{C}'$ does not intersect $e_2$ and intersects $e_3$ only once, we deduce our formula for the circumference of $\mathcal{C}'$, which is given by the integral of $\mathcal{I}m (\omega_q)$ over $\gamma'$. This analysis also shows that the circumference of any other vertical cylinders is a linear combination of $h_1+h_3$ and $h_1+h_2$ with integer coefficients. It follows once again from Proposition \ref{field} that these two numbers must be commensurable. It then comes from the fact the circumferences of the cylinders $\mathcal{C}$ and $\mathcal{C}'$ are commensurable that $s=0$. To sum up our analysis, we know that the vertical cylinder $\mathcal{C}'$ has the saddle connection labeled $3$ as a cross curve and that its circumference is $h_1+h_3$. The only possibility for this to happen is that $\mathcal{C}_1$ and $\mathcal{C}_4$ are isometric, as depicted below: 

\begin{center}
    \begin{tikzpicture}
    \begin{scope}[scale=.6]

\draw (0,0) rectangle (2,1);
\draw (0,2) -- (-.5,3) -- (.5,3) -- (1,2) -- cycle;
\draw (0,4) rectangle (3,5);
\draw (0,6) rectangle (2,7);
\draw[fill=black, opacity=.6] (1,0) rectangle (2,1);
\draw[fill=black, opacity=.6] (2,4) rectangle (3,5);
\draw[fill=black, opacity=.3] (0,6) rectangle (1,7);
\draw[fill=black, opacity=.3] (0,4) rectangle (1,5);

\draw (0,0) node{$\bullet$};
\draw (1,0) node{$\bullet$};
\draw (2,0) node{$\bullet$};

\draw (0,1) node{$\times$};
\draw (2,1) node{$\times$};

\draw (0,2) node{$\times$};
\draw (1,2) node{$\times$};

\draw (-.5,3) node{$\bullet$};
\draw (.5,3) node{$\bullet$};

\draw (0,4) node{$\times$};
\draw (1,4) node{$\times$};
\draw (3,4) node{$\times$};

\draw (0,5) node{$\bullet$};
\draw (2,5) node{$\bullet$};
\draw (3,5) node{$\bullet$};

\draw (0,6) node{$\bullet$};
\draw (2,6) node{$\bullet$};

\draw (0,7) node{$\times$};
\draw (1,7) node{$\times$};
\draw (2,7) node{$\times$};

\draw (.5,.25) node[scale=.7]{0};
\draw (1.5,.25) node[scale=.7]{3};

\draw (1,.75) node[scale=.7]{5};

\draw (.5,2.25) node[scale=.7]{1};
\draw (0,2.75) node[scale=.7]{0};

\draw (.5,4.25) node[scale=.7]{2};
\draw (2,4.25) node[scale=.7]{5};

\draw (1,4.75) node[scale=.7]{4};
\draw (2.5,4.75) node[scale=.7]{3};

\draw (1,6.25) node[scale=.7]{4};

\draw (.5,6.75) node[scale=.7]{2};
\draw (1.5,6.75) node[scale=.7]{1};
 
\draw (1,-1) node{The vertical cylinders $\mathcal{C}$ and $\mathcal{C}'$ are isometric};

\end{scope}
    \end{tikzpicture}
\end{center}

From there, we can conclude in two different ways. The first one is as follows: let $\lambda$ be the involution that fixes $\mathcal{C}_2$ and $\mathcal{C}_3$ while exchanging $\mathcal{C}_1$ and $\mathcal{C}_4$ and such that $\lambda^{\ast}\omega_q = -\omega_q$. It has four fixed points located on the core curves of $\mathcal{C}_2$ and $\mathcal{C}_3$ and thus by Riemann-Hurwitz the surface $X_q / \lambda$ has genus 1, which proves that $\lambda$ is a Prym involution. We conclude as in the previous case that $\M$ is a connected component of some $\Omega E^{odd}_D(2,2)$ with $D$ not a square. The other way to conclude is as follows: shear the surface $q$ so that the saddle connection labeled $2$ in $\mathcal{C}_4$ sits above the singularity $\bullet$ and that $\mathcal{C}_4$ is not crossed by a vertical saddle connection. Since $\mathcal{C}_1$ and $\mathcal{C}_4$ are isometric, it is also the case that the saddle connection labeled $3$ in $\mathcal{C}_1$ sits bellow the singularity $\times$ and that $\mathcal{C}_1$ is not crossed by a vertical saddle connection. We can thus overcollapse simultaneously $\mathcal{C}_1$ and $\mathcal{C}_4$. Notice that the surface we reach is horizontally periodic with saddle connection diagram given by $4.2$ and we can use our analysis of Case $4.2$ to conclude the proof of Theorem \ref{T1}.
\end{proof}

\textbf{Remark.} Along the proof of Theorem \ref{T1}, we showed in particular that if $\M$ is a nonarithmetic rel-invariant rank one affine invariant orbifold, then it contains a horizontally periodic surface corresponding to the cylinder diagram $4.2$. This is coherent with Theorem B in \cite{lanneau2014connected} in the case $\kappa = (2,2)^{odd}$.

\begin{cor}\label{veech22}
If $q$ is a nonarithmetic Veech surface in $\mathcal{H}^{odd}(2,2)$ that is not contained in the Prym locus, then the subset $\GL \cdot\mathcal{F}_{q}$ is dense in $\mathcal{H}^{odd}(2,2)$.  
\end{cor}

\begin{proof}
Let $\M$ be the closure of $\GL \cdot\mathcal{F}_{q}$. This is an affine invariant orbifold by \cite{eskin2015isolation} as it is connected, closed, and $\GL$ invariant, and it is rel-invariant. If the rank of $\M$ is 1 then by Lemma 2.11 in \cite{lanneau2017finiteness} we have that $\M$ is nonarithmetic and it would follow from Theorem \ref{T1} that $q$ is contained in the Prym locus which we assumed is not the case. The rank of $\M$ is thus at least 2 but by Theorem 1.1 in \cite{aulicino2016rank} we know that the only rel-invariant rank 2 affine invariant orbifold is the Prym locus. The rank of $\M$ is thus 3 and by Theorem 1.1 in \cite{mirzakhani2018full} this is actually the whole stratum $\mathcal{H}^{odd}(2,2)$ as the hyperelliptic locus is not rel-invariant either.
\end{proof}

\section{The stratum $\mathcal{H}(3,1)$}\label{H31}

\begin{prop}\label{combinatorix31}
Let $\M \subset \mathcal{H}(3,1)$ be a rel-invariant affine invariant orbifold. There is a horizontally periodic surface $q \in \M$ made of four stable cylinders $\mathcal{C}_1, \cdots, \mathcal{C}_4$ with $\mathcal{C}_1$ in $\mathfrak{C}^-$, $\mathcal{C}_2$ and $\mathcal{C}_3$ in $\mathfrak{C}^+$, $\mathcal{C}_4$ in $\mathfrak{C}^0$ and such that there is a saddle connection $\sigma$ that appears both on the bottom boundary component of $\mathcal{C}_1$ and the top boundary component of $\mathcal{C}_4$, as depicted below:

\medskip 

\begin{center}
\begin{tikzpicture}[scale=.5]
\draw[dashed] (0,3) -- (3,3);
\draw (3,3) -- (4,2) -- (5,3);
\draw[dashed] (5,3) -- (8,3);
\draw (8,3) -- (7,2) -- (7,1);
\draw[dashed] (1,1) -- (3.5,1);
\draw[dashed] (4.5,1) -- (7,1);
\draw (3.5,1) -- (4.5,1);
\draw (1,1) -- (1,2) -- (0,3);
\draw[dashed] (10,2.5) -- (12,2.5);
\draw[dashed] (13,2.5) -- (15,2.5);
\draw[dashed] (10,1.5) -- (15,1.5);
\draw (10,1.5) -- (10,2.5);
\draw (15,1.5) -- (15,2.5);
\draw (12,2.5) -- (13,2.5);

\draw (2,2.4) node[scale=.7]{$\mathcal{C}_2$};
\draw (6,2.4) node[scale=.7]{$\mathcal{C}_3$};
\draw (4,1.4) node[scale=.7]{$\mathcal{C}_1$};

\draw (4,.75) node[scale=.7]{$\sigma$};
\draw (12.5,2.75) node[scale=.7]{$\sigma$};
\draw (12.5,2) node[scale=.7]{$\mathcal{C}_4$};

\draw (1,2) node{$\bullet$};
\draw (4,2) node{$\bullet$};
\draw (7,2) node{$\bullet$};

\draw (12,2.5) node{$\times$};
\draw (13,2.5) node{$\times$};

\draw (3.5,1) node{$\times$};
\draw (4.5,1) node{$\times$};

\end{tikzpicture}
\end{center}

Conversely, If a surface $q \in \mathcal{H}(3,1)$ has four cylinders, then all of them are stable and exactly one of them is absolute. 

\end{prop}

\begin{proof}
Let $q$ be a horizontally periodic surface in $\M$. Such a surface is given by Proposition \ref{existence} and up to applying imaginary rel, which preserves $\M$ by assumption, we can assume that the cylinders are stable. In particular, there are exactly two horizontal saddle connections that connect the order 1 singularity to itself and, up to rotating by an angle $\pi$, they appear on the bottom boundary component of two cylinders $\mathcal{C}'_2$ and $\mathcal{C}'_3$ of same type. The union of these two saddle connections also appears on the top component of a third relative cylinder of the opposite type that we denote by $\mathcal{C}'_1$. Notice that $c_1 = c_2 + c_3$, where $c_i$ denotes the circumference of $\mathcal{C}'_i$. 

\begin{center}
\begin{tikzpicture}[scale=.5]
\draw[dashed] (0,3) -- (3,3);
\draw (3,3) -- (4,2) -- (5,3);
\draw[dashed] (5,3) -- (8,3);
\draw (8,3) -- (7,2) -- (7,1);
\draw[dashed] (7,1) -- (1,1);
\draw (1,1) -- (1,2) -- (0,3);

\draw (2,2.4) node[scale=.7]{$\mathcal{C}'_2$};
\draw (6,2.4) node[scale=.7]{$\mathcal{C}'_3$};
\draw (4,1.4) node[scale=.7]{$\mathcal{C}'_1$};

\draw (1,2) node{$\bullet$};
\draw (4,2) node{$\bullet$};
\draw (7,2) node{$\bullet$};

\end{tikzpicture}
\end{center}

If these cylinders cover $X_q$, the top component of the cylinders $\mathcal{C}'_2$ and $\mathcal{C}'_3$ is glued to the bottom component of $\mathcal{C}'_1$. We claim that either $\mathcal{C}'_2$ or $\mathcal{C}'_3$ carries on its boundary a saddle connection that is smaller than its circumference. If that is not the case, it means that their top component is made of a single saddle connection and the total angle around the order 3 singularities would add up to $4\pi$, which is a contradiction. Let $s$ be such a saddle connection. Up to renaming the cylinders, we can assume that $s$ sits on the top component of $\mathcal{C}'_2$ and after shearing the surface $q$ we can assume that $s$ is the cross curve of a vertical and absolute cylinder that crosses $\mathcal{C}'_1$ and $\mathcal{C}'_2$ and that we denote by $\mathcal{C}_4$, as depicted in the following:

\begin{center}
\begin{tikzpicture}[scale=.5]

\begin{scope}[shift={(0,0)}]
\draw[dashed] (0,3) -- (3,3);
\draw (3,3) -- (4,2) -- (5,3);
\draw[dashed] (5,3) -- (8,3);
\draw (8,3) -- (7,2) -- (7,1);
\draw[dashed] (7,1) -- (1,1);
\draw (1,1) -- (1,2) -- (0,3);

\draw (1,3) node{$\times$}; 
\draw[thick] (1,3) -- (2,3);
\draw (2,3) node{$\times$};
\draw (1.5,3.4) node[scale=.7]{$s$};

\draw (3,1) node{$\times$};
\draw (4,1) node{$\times$};
\draw[thick] (3,1) -- (4,1);
\draw (3.5,0.6) node[scale=.7]{$s$};

\draw (1,2) node{$\bullet$};
\draw (4,2) node{$\bullet$};
\draw (7,2) node{$\bullet$};

\draw [-stealth](8.5,2) -- (9.5,2);
\draw (9,2.5) node{shear};

\end{scope}

\begin{scope}[shift={(10,0)}]

\draw[dashed] (1,3) -- (4,3);
\draw (4,3) -- (4,2) -- (5,3);
\draw[dashed] (5,3) -- (8,3);
\draw (8,3) -- (7,2) -- (7,1);
\draw[dashed] (7,1) -- (1,1);
\draw (1,1) -- (1,2) -- (1,3);

\draw[fill=black, opacity=.3] (2,1) rectangle (3,3);

\draw (2,3) node{$\times$}; 
\draw (3,3) node{$\times$};
\draw[thick] (2,3) -- (3,3);
\draw (2.5,3.4) node[scale=.7]{$s$};

\draw (2,1) node{$\times$};
\draw (3,1) node{$\times$};
\draw[thick] (2,1) -- (3,1);
\draw (2.5,0.6) node[scale=.7]{$s$};

\draw (1,2) node{$\bullet$};
\draw (4,2) node{$\bullet$};
\draw (7,2) node{$\bullet$};

\end{scope};

\end{tikzpicture}
\end{center}

By Proposition \ref{periodicity}, the vertical direction on $q$ is completely periodic and, up to applying real rel, we can assume the corresponding cylinder decomposition is stable. The same argument as in the beginning of the proof reveals that there are three other vertical relative cylinders having the order 1 singularity on one of their components, we denote them by $\mathcal{C}_1$, $\mathcal{C}_2$ and $\mathcal{C}_3$ with $\mathcal{C}_2$ and $\mathcal{C}_3$ having same type and the sum of the circumferences of $\mathcal{C}_2$ and $\mathcal{C}_3$ is equal to the one of $\mathcal{C}_1$. We can label the types so that $\mathcal{C}_1$ is in in $\mathfrak{C}^-$, $\mathcal{C}_2$ and $\mathcal{C}_3$ in $\mathfrak{C}^+$. Since there are at most four cylinders in a given direction for a surface in $\mathcal{H}(3,1)$, see for instance Lemma C.1 in \cite{aulicino2016rank1}, we deduce that these horizontal cylinders cover $X_q$.

Now, if $\mathcal{C}'_1$, $\mathcal{C}'_2$ and $\mathcal{C}'_3$ do not cover $X_q$, there has to be another horizontal cylinder $\mathcal{C}_4$ which has to be absolute. In any case we found a surface $q \in \M$ with four cylinders $\mathcal{C}_1, \cdots, \mathcal{C}_4$ as in the statement and such that $c_1 = c_2 + c_3$. The cylinder $\mathcal{C}_1$ being relative, a saddle connection on the bottom boundary component of $\mathcal{C}_1$ has to appear on the top component of either $\mathcal{C}_2$, $\mathcal{C}_3$ or $\mathcal{C}_4$ but because $c_1 = c_2 + c_3$ and the surface $q$ is connected, we deduce that there is a saddle connection $\sigma$ that is both on the bottom component of $\mathcal{C}_1$ and the top component of $\mathcal{C}_4$.

The last statement follows from our analysis of the cylinders that carry the order one singularity on their boundaries. It also follows from the list of 4-cylinder diagrams provided in Lemma 6.3 in \cite{aulicino2016rank}
\end{proof}

\begin{thm}[Theorem \ref{T2}]
Let $\M$ be a rel-invariant rank one affine invariant orbifold in $\mathcal{H}(3,1)$. Then $\M$ is arithmetic.
\end{thm}

\begin{proof}

Let $\M$ be a rel-invariant rank one affine invariant orbifold in $\mathcal{H}(3,1)$ and let $q$ be a surface in $\M$ as Proposition \ref{combinatorix31}. It is a consequence of Corollary \ref{notmixed} that the circumferences of the cylinders $\mathcal{C}_1$, $\mathcal{C}_2$ and $\mathcal{C}_3$ are pairwise commensurable. Shear the surface $q$ such that $\sigma$ sits below the singularity that is on the top boundary component in $\mathcal{C}_1$. Let $q'$ be the surface obtained by overcollapsing $\mathcal{C}_1$ using imaginary rel. We depict an example of this construction for the reader's convenience: 

\begin{center}
    \begin{tikzpicture}
    \begin{scope}[scale=.6]
    
    \draw (0,0) rectangle (3,1);
    \draw (0,2) -- (.5,3) -- (2.5,3) -- (2,2) -- cycle;
    \draw (0,4) rectangle (1,5);
    \draw (2,4) rectangle (3,5);
    
    \draw (1.5,.5) node[scale=.7]{$\mathcal{C}_4$};
    \draw (1.25,2.5) node[scale=.7]{$\mathcal{C}_1$};    
    \draw (.5,4.5) node[scale=.7]{$\mathcal{C}_2$};
    \draw (2.5,4.5) node[scale=.7]{$\mathcal{C}_3$};    
    
    \draw (0,0) node{$\times$};
    \draw (1,0) node{$\times$};
    \draw (2,0) node{$\times$};
    \draw (3,0) node{$\times$};
    \draw (0,1) node{$\times$};
    \draw (2,1) node{$\times$};
    \draw (3,1) node{$\times$};
    
    \draw (0,2) node{$\times$};
    \draw (2,2) node{$\times$};    
    \draw (0.5,3) node{$\bullet$};    
    \draw (1.5,3) node{$\bullet$};
    \draw (2.5,3) node{$\bullet$};    
    
    \draw (0,4) node{$\bullet$};
    \draw (1,4) node{$\bullet$};   
    \draw (0,5) node{$\times$};    
    \draw (1,5) node{$\times$};    
    
    \draw (2,4) node{$\bullet$};
    \draw (3,4) node{$\bullet$};
    \draw (2,5) node{$\times$};    
    \draw (3,5) node{$\times$};  
    
    \draw (.5,-.25) node[scale=.7]{$1$};
    \draw (1.5,-.25) node[scale=.7]{$2$};
    \draw (2.5,-.25) node[scale=.7]{$3$};
    \draw (1,1.25) node[scale=.7]{$\sigma$};
    \draw (2.5,1.25) node[scale=.7]{$3$};
    
    \draw (1,1.75) node[scale=.7]{$\sigma$};    
    \draw (1,3.25) node[scale=.7]{$5$};       
    \draw (2,3.25) node[scale=.7]{$6$};       
    
    \draw (.5,3.75) node[scale=.7]{$5$};
    \draw (.5,5.25) node[scale=.7]{$1$};
    \draw (2.5,3.75) node[scale=.7]{$6$};
    \draw (2.5,5.255) node[scale=.7]{$2$};   
    
    \draw[>=latex,->](4.5,2.75) -- (6,2.75) ;
\draw (5.25,3.4) node[scale=.6]{overcollapse $\mathcal{C}_1$};
    \draw (1.5,-1) node[scale=.7]{$\sigma$ sits below $\bullet$ in $\mathcal{C}_1$};
    
    \end{scope}
    
    \begin{scope}[shift={(4.5,0)}, scale=.6]
    
    \draw (0,0) -- (0,1) -- (.5,.75) -- (1.5,.75) -- (2,1) -- (3,1) -- (3,0) -- cycle;
    \draw (0,3) rectangle (1,5);
    \draw (2,3) -- (2,5) -- (3,5) -- (3,3) -- (2.5,3.25) -- cycle;
    
    \draw (2.5,.5) node[scale=.7]{$\mathcal{C}'_4$};
    \draw (.5,4) node[scale=.7]{$\mathcal{C}'_2$};
    \draw (2.5,4) node[scale=.7]{$\mathcal{C}'_3$};    
    
    \draw (0,0) node{$\times$};
    \draw (1,0) node{$\times$};
    \draw (2,0) node{$\times$};
    \draw (3,0) node{$\times$};
    \draw (0,1) node{$\times$};
    \draw (2,1) node{$\times$};
    \draw (3,1) node{$\times$};
    \draw (.5,.75) node{$\bullet$};
    \draw (1.5,.75) node{$\bullet$};    
    
    \draw (0,3) node{$\bullet$};
    \draw (1,3) node{$\bullet$};   
    \draw (0,5) node{$\times$};    
    \draw (1,5) node{$\times$};    
    
    \draw (2,3) node{$\bullet$};
    \draw (3,3) node{$\bullet$};
    \draw (2,5) node{$\times$};    
    \draw (3,5) node{$\times$};
    \draw (2.5,3.25) node{$\times$}; 
    
    \draw (.5,-.25) node[scale=.7]{$1$};
    \draw (1.5,-.25) node[scale=.7]{$2$};
    \draw (2.5,-.25) node[scale=.7]{$3$};
    \draw (2.5,1.25) node[scale=.7]{$3$};
    
    \draw (1,1) node[scale=.7]{$5$};       1
    
    \draw (.5,2.75) node[scale=.7]{$5$};
    \draw (.5,5.25) node[scale=.7]{$1$};
    \draw (2.5,5.255) node[scale=.7]{$2$};   
    
    \draw (1.5,-1) node[scale=.7]{$q'$};
    
    \end{scope}
    \end{tikzpicture}
\end{center}

The surface $q'$ has three cylinders $\mathcal{C}'_2$, $\mathcal{C}'_3$ and $\mathcal{C}_4'$ that come from $\mathcal{C}_2$, $\mathcal{C}_3$ and $\mathcal{C}_4$ plus an additional relative cylinder $\mathcal{C}$ of height $\varepsilon$ that was created by the overcollapse of $\mathcal{C}_1$. Notice that because we arranged that $\sigma$ sit below the top singularity of $\mathcal{C}_1$ and that no singularity can enter the bottom component of $\mathcal{C}_4$ during the overcollapse of $\mathcal{C}_1$ (a saddle connection on the bottom component of $\mathcal{C}_4$ is only attached to the top component of a cylinder in $\mathfrak{C}^+$ or to $\mathcal{C}_4$ itself), $\mathcal{C}'_4$ is also relative. Because we overcollapsed using rel and the circumference of a cylinder is an absolute period of $q$, we know that for any $i \in \{2,3,4\}$ the cylinder $\mathcal{C}'_i$ has circumference $c_i$. By Proposition \ref{combinatorix31}, the surface $q'$ has three relative cylinders and one absolute cylinder. Up to renaming the cylinders $\mathcal{C}_2$ and $\mathcal{C}_3$, we can assume that it is $\mathcal{C}_3'$ that is absolute. It follows from Proposition \ref{noneq} that the circumferences of $\mathcal{C}'_2$ and $\mathcal{C}'_4$ are commensurable \textit{i.e} that $c_2$ is commensurable to $c_4$. We already knew that $c_1$, $c_2$ and $c_3$ were pairwise commensurable so we deduce from Proposition \ref{field} applied to $q$ that $\M$ is arithmetic. 

\end{proof}  

\begin{cor}
If $q$ is a nonarithmetic Veech surface in $\mathcal{H}(3,1)$, then the subset $\GL \cdot \mathcal{F}_q$ is dense in $\mathcal{H}(3,1)$. 
\end{cor}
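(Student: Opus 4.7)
The plan is to repeat the argument used for the analogous proposition in Section \ref{H22}, substituting Theorem \ref{T2} for Theorem \ref{T1}. I would first let $\M$ denote the closure of $\GL \cdot \mathcal{F}_X$ in $\mathcal{H}(3,1)$. By construction $\M$ is closed, connected, $\GL$-invariant, and saturated by the isoperiodic foliation, so the theorem of Eskin--Mirzakhani--Mohammadi makes it an affine invariant orbifold. It then remains to determine its rank.

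The first step is to rule out $rk(\M) = 1$. Since $X$ is a non arithmetic Veech surface, its trace field is strictly larger than $\mathbb{Q}$, and the inclusion given by Proposition \ref{field} applied to $X \in \M$ yields $k(\M) \supsetneq \mathbb{Q}$, so $\M$ is itself non arithmetic. Theorem \ref{T2} then forbids $rk(\M) = 1$, since a rank one non arithmetic proper affine invariant orbifold in $\mathcal{H}(3,1)$ does not exist.

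The second step, ruling out $rk(\M) = 2$, is the main substance of the argument and is where I would invoke the Aulicino--Nguyen classification of rank two affine invariant orbifolds in genus $3$ from \cite{aulicino2016rank} and \cite{aulicino2016rank1}. Their classification shows that in genus $3$ the only rank two affine invariant orbifolds saturated by the isoperiodic foliation are Prym eigenform loci; since we recalled in Section \ref{framework} that $\Omega E_D(3,1)$ is empty, there is no such candidate inside $\mathcal{H}(3,1)$. Hence $rk(\M) \neq 2$.

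It follows that $rk(\M) = 3$, which is maximal, and because $\M$ is saturated by the isoperiodic foliation the full rank theorem of Mirzakhani--Wright \cite{mirzakhani2018full} identifies $\M$ with the whole stratum $\mathcal{H}(3,1)$. The only step that appeals to deep external input is the exclusion of rank two; the rank one case is immediate from Theorem \ref{T2} proved above, and the final identification is a direct citation.
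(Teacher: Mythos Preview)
Your overall architecture matches the paper's: exclude rank $1$ via Theorem~\ref{T2}, exclude rank $2$ via Aulicino--Nguyen, conclude that rank $3$ forces $\M=\mathcal{H}(3,1)$. Two points deserve correction or comparison.

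For rank $1$, your appeal to Proposition~\ref{field} runs in the wrong direction: that proposition gives only $k(\M)\subseteq\mathbb{Q}[c_2c_1^{-1},\dots,c_mc_1^{-1}]$, an \emph{upper} bound, so knowing the circumference ratios are irrational does not by itself force $k(\M)\supsetneq\mathbb{Q}$. What is needed is Wright's reverse inclusion (alluded to just after Proposition~\ref{field} but not stated there), or the direct observation that when $rk(\M)=1$ and $\M$ is Rel-saturated one has $T_X\M = T_X(\GL\cdot X)\oplus Ker\,\rho$, whence $k(\M)$ coincides with the field of definition of the closed orbit, i.e.\ the trace field of $X$. The paper's own proof simply asserts ``$\M$ would be non arithmetic'' without elaboration, so your instinct to justify this step is sound; only the citation is misdirected.

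For rank $2$, you imitate the Section~\ref{H22} argument (rank two plus Rel-saturation forces a Prym-type locus, then use that no such locus lives in $\mathcal{H}(3,1)$). The paper instead invokes the stronger Aulicino--Nguyen statement that $\mathcal{H}(3,1)$ contains \emph{no} rank $2$ affine invariant orbifold whatsoever, so the saturation hypothesis is unnecessary. Note also a terminological slip: you write ``Prym eigenform loci'', but those are rank $1$; the rank $2$ object appearing in the $\mathcal{H}^{odd}(2,2)$ argument is the full Prym locus. Finally, once $rk(\M)=3$ and $\M$ is Rel-saturated, a dimension count already gives $\dim\M=2\cdot 3+1=\dim\mathcal{H}(3,1)$, so Mirzakhani--Wright is not strictly required here.
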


\begin{proof}
Let $\M$ be the closure of $\GL \cdot\mathcal{F}_{q}$. This is an affine invariant orbifold by \cite{eskin2015isolation} as it is connected, closed, and $\GL$ invariant, and it is rel-invariant.  The rank of $\M$ has to be at least 2 as otherwise Lemma 2.11 in \cite{lanneau2017finiteness} implies that $\M$ is nonarithmetic, in contradiction with Theorem \ref{T2}. However Theorem 1.1 in \cite{aulicino2016rank} states that there are no rank 2 affine invariant orbifolds in $\mathcal{H}(3,1)$ and then the rank of $\M$ is $3$. Finally, we invoke Theorem 1.1 in \cite{mirzakhani2018full} to deduce that $\M$ is the whole stratum $\mathcal{H}(3,1)$.  
\end{proof}

\section{Data Availability Statement}

We used the cylinder database provided by the sage package $\mathrm{surface\_dynamics}$. See \url{https://github.com/flatsurf/surface-dynamics} for more information.

\appendix

\section{Cylinder diagrams in $\mathcal{H}^{odd}(2,2)$}

We recall the list of all cylinder diagrams in the stratum $\mathcal{H}^{odd}(2,2)$. We used the database provided by the sage package $\mathrm{surface\_dynamics}$. See \url{https://github.com/flatsurf/surface-dynamics} for more information.

\begin{center}
\begin{tikzpicture}[scale=.7]
\begin{scope}[shift = {(0,0)}] 
\draw (0,0) rectangle (6,1);

\draw (0,0) node{$\bullet$};
\draw (1,0) node{$\bullet$}; 
\draw (2,0) node{$\times$};
\draw (3,0) node{$\times$};
\draw (4,0) node{$\times$};
\draw (5,0) node{$\bullet$};
\draw (6,0) node{$\bullet$};

\draw (0,1) node{$\bullet$};
\draw (1,1) node{$\bullet$};
\draw (2,1) node{$\times$};
\draw (3,1) node{$\times$};
\draw (4,1) node{$\times$};
\draw (5,1) node{$\bullet$};
\draw (6,1) node{$\bullet$};

\draw (.5,.25) node[scale = .7]{0}; 
\draw (.5,.75) node[scale = .7]{3};

\draw (1.5,.25) node[scale = .7]{5}; 
\draw (1.5,.75) node[scale = .7]{5};

\draw (2.5,.25) node[scale = .7]{2}; 
\draw (2.5,.75) node[scale = .7]{1};

\draw (3.5,.25) node[scale = .7]{1}; 
\draw (3.5,.75) node[scale = .7]{2};

\draw (4.5,.25) node[scale = .7]{4}; 
\draw (4.5,.75) node[scale = .7]{4};

\draw (5.5,.25) node[scale = .7]{3}; 
\draw (5.5,.75) node[scale = .7]{0};

\draw (3,-1) node{1.1};

\end{scope}

\begin{scope}[shift = {(8,0)}] 
\draw (0,0) rectangle (6,1);

\draw (0,0) node{$\bullet$};
\draw (1,0) node{$\times$}; 
\draw (2,0) node{$\bullet$};
\draw (3,0) node{$\times$};
\draw (4,0) node{$\bullet$};
\draw (5,0) node{$\times$};
\draw (6,0) node{$\bullet$};

\draw (0,1) node{$\times$};
\draw (1,1) node{$\bullet$};
\draw (2,1) node{$\times$};
\draw (3,1) node{$\bullet$};
\draw (4,1) node{$\times$};
\draw (5,1) node{$\bullet$};
\draw (6,1) node{$\times$};

\draw (.5,.25) node[scale = .7]{0}; 
\draw (.5,.75) node[scale = .7]{4};

\draw (1.5,.25) node[scale = .7]{5}; 
\draw (1.5,.75) node[scale = .7]{2};

\draw (2.5,.25) node[scale = .7]{2}; 
\draw (2.5,.75) node[scale = .7]{5};

\draw (3.5,.25) node[scale = .7]{3}; 
\draw (3.5,.75) node[scale = .7]{1};

\draw (4.5,.25) node[scale = .7]{1}; 
\draw (4.5,.75) node[scale = .7]{3};

\draw (5.5,.25) node[scale = .7]{4}; 
\draw (5.5,.75) node[scale = .7]{0};

\draw (3,-1) node{1.2};

\end{scope}

\begin{scope}[shift = {(16,0)}] 
\draw (0,0) rectangle (6,1);

\draw (0,0) node{$\bullet$};
\draw (1,0) node{$\times$}; 
\draw (2,0) node{$\bullet$};
\draw (3,0) node{$\times$};
\draw (4,0) node{$\times$};
\draw (5,0) node{$\bullet$};
\draw (6,0) node{$\bullet$};

\draw (0,1) node{$\times$};
\draw (1,1) node{$\times$};
\draw (2,1) node{$\bullet$};
\draw (3,1) node{$\bullet$};
\draw (4,1) node{$\times$};
\draw (5,1) node{$\bullet$};
\draw (6,1) node{$\times$};

\draw (.5,.25) node[scale = .7]{0}; 
\draw (.5,.75) node[scale = .7]{2};

\draw (1.5,.25) node[scale = .7]{5}; 
\draw (1.5,.75) node[scale = .7]{5};

\draw (2.5,.25) node[scale = .7]{3}; 
\draw (2.5,.75) node[scale = .7]{4};

\draw (3.5,.25) node[scale = .7]{2}; 
\draw (3.5,.75) node[scale = .7]{3};

\draw (4.5,.25) node[scale = .7]{1}; 
\draw (4.5,.75) node[scale = .7]{1};

\draw (5.5,.25) node[scale = .7]{4}; 
\draw (5.5,.75) node[scale = .7]{0};

\draw (3,-1) node{1.3};

\end{scope}

\begin{scope}[shift = {(0,-6)}] 
\draw (0,0) rectangle (5,1);
\draw (0,2) rectangle (2,3);

\draw (0,0) node{$\bullet$};
\draw (1,0) node{$\times$};
\draw (2,0) node{$\times$};
\draw (3,0) node{$\bullet$};
\draw (5,0) node{$\bullet$};

\draw (0,1) node{$\times$};
\draw (1,1) node{$\bullet$};
\draw (2,1) node{$\times$};
\draw (3,1) node{$\times$};
\draw (4,1) node{$\bullet$};
\draw (5,1) node{$\times$};

\draw (0,2) node{$\times$};
\draw (1,2) node{$\bullet$};
\draw (2,2) node{$\times$};
\draw (0,3) node{$\bullet$};
\draw (2,3) node{$\bullet$};

\draw (.5,.25) node[scale = .7]{0}; 
\draw (.5,.75) node[scale = .7]{2};

\draw (1.5,.25) node[scale = .7]{1}; 
\draw (1.5,.75) node[scale = .7]{5};

\draw (2.5,.25) node[scale = .7]{2}; 
\draw (2.5,.75) node[scale = .7]{1};

\draw (4,.25) node[scale = .7]{3}; 
\draw (3.5,.75) node[scale = .7]{4};

\draw (.5,2.25) node[scale = .7]{4}; 
\draw (1,2.75) node[scale = .7]{3};

\draw (1.5,2.25) node[scale = .7]{5}; 
\draw (4.5,.75) node[scale = .7]{0};

\draw (2.5,-1) node{2.1};
\end{scope}

\begin{scope}[shift = {(8,-6)}] 

\draw (0,0) rectangle (4,1);
\draw (0,2) rectangle (4,3);

\draw (0,0) node{$\bullet$};
\draw (1,0) node{$\times$};
\draw (2,0) node{$\bullet$};
\draw (3,0) node{$\times$};
\draw (4,0) node{$\bullet$};

\draw (0,1) node{$\times$};
\draw (3,1) node{$\bullet$};
\draw (4,1) node{$\times$};

\draw (0,2) node{$\bullet$};
\draw (1,2) node{$\times$};
\draw (4,2) node{$\bullet$};

\draw (0,3) node{$\times$};
\draw (1,3) node{$\bullet$};
\draw (2,3) node{$\times$};
\draw (3,3) node{$\bullet$};
\draw (4,3) node{$\times$};

\draw (.5,.25) node[scale = .7]{0};
\draw (1.5,.25) node[scale = .7]{1};
\draw (2.5,.25) node[scale = .7]{3};
\draw (3.5,.25) node[scale = .7]{4};

\draw (1.5,.75) node[scale = .7]{5};
\draw (3.5,.75) node[scale = .7]{3};

\draw (.5,2.25) node[scale = .7]{2};
\draw (2.5,2.25) node[scale = .7]{5};

\draw (.5,2.75) node[scale = .7]{4};
\draw (1.5,2.75) node[scale = .7]{2};
\draw (2.5,2.75) node[scale = .7]{1};
\draw (3.5,2.75) node[scale = .7]{0};

\draw (2,-1) node{2.2};
\end{scope}

\begin{scope}[shift={(16,-6)}]
\draw (0,0) rectangle (5,1);
\draw (0,2) rectangle (1,3);

\draw (0,0) node{$\bullet$};
\draw (1,0) node{$\times$};
\draw (2,0) node{$\bullet$};
\draw (3,0) node{$\times$};
\draw (4,0) node{$\times$};
\draw (5,0) node{$\bullet$};

\draw (0,1) node{$\times$};
\draw (1,1) node{$\bullet$};
\draw (2,1) node{$\times$};
\draw (3,1) node{$\bullet$};
\draw (4,1) node{$\bullet$};
\draw (5,1) node{$\times$};

\draw (0,2) node{$\bullet$};
\draw (1,2) node{$\bullet$};

\draw (0,3) node{$\times$};
\draw (1,3) node{$\times$};

\draw (.5,.25) node[scale = .7]{0};
\draw (1.5,.25) node[scale = .7]{2};
\draw (2.5,.25) node[scale = .7]{3};
\draw (3.5,.25) node[scale = .7]{4};
\draw (4.5,.25) node[scale = .7]{1};

\draw (.5,.75) node[scale = .7]{1};
\draw (1.5,.75) node[scale = .7]{3};
\draw (2.5,.75) node[scale = .7]{2};
\draw (3.5,.75) node[scale = .7]{5};
\draw (4.5,.75) node[scale = .7]{0};

\draw (.5,2.25) node[scale = .7]{5};
\draw (.5,2.75) node[scale = .7]{4};

\draw (2.5,-1) node{2.3};
\end{scope}

\begin{scope}[shift={(0,-12)}]

\draw (0,0) rectangle (5,1);
\draw (0,2) rectangle (3,3);

\draw (0,0) node{$\bullet$};
\draw (1,0) node{$\times$};
\draw (2,0) node{$\bullet$};
\draw (5,0) node{$\bullet$}; 

\draw (0,1) node{$\times$}; 
\draw (1,1) node{$\times$};
\draw (2,1) node{$\bullet$};
\draw (3,1) node{$\times$};
\draw (4,1) node{$\bullet$};
\draw (5,1) node{$\times$};

\draw (0,2) node{$\bullet$};
\draw (1,2) node{$\times$}; 
\draw (2,2) node{$\times$};
\draw (3,2) node{$\bullet$}; 

\draw (0,3) node{$\bullet$}; 
\draw (3,3) node{$\bullet$}; 

\draw (.5,.25) node[scale= .7]{0};
\draw (1.5,.25) node[scale= .7]{2};
\draw (3.5,.25) node[scale= .7]{4};

\draw (.5,.75) node[scale= .7]{5};
\draw (1.5,.75) node[scale= .7]{2};
\draw (2.5,.75) node[scale= .7]{1};
\draw (3.5,.75) node[scale= .7]{3};
\draw (4.5,.75) node[scale= .7]{0};

\draw (.5,2.25) node[scale= .7]{1};
\draw (1.5,2.25) node[scale= .7]{5};
\draw (2.5,2.25) node[scale= .7]{3};
\draw (1.5,2.75) node[scale= .7]{4};

\draw (2.5,-1) node{2.4};
\end{scope}

\begin{scope}[shift={(8,-12)}]

\draw (0,0) rectangle (4,1); 
\draw (0,2) rectangle (2,3); 

\draw (0,0) node{$\bullet$};
\draw (1,0) node{$\bullet$};
\draw (2,0) node{$\times$};
\draw (3,0) node{$\times$};
\draw (4,0) node{$\bullet$};

\draw (0,1) node{$\bullet$};
\draw (1,1) node{$\times$}; 
\draw (2,1) node{$\times$}; 
\draw (3,1) node{$\bullet$};
\draw (4,1) node{$\bullet$};

\draw (0,2) node{$\times$};
\draw (1,2) node{$\bullet$};
\draw (2,2) node{$\times$};

\draw (0,3) node{$\bullet$};
\draw (1,3) node{$\times$};
\draw (2,3) node{$\bullet$};

\draw (.5,.25) node[scale=.7]{0};
\draw (1.5,.25) node[scale=.7]{3};
\draw (2.5,.25) node[scale=.7]{1};
\draw (3.5,.25) node[scale=.7]{2};

\draw (.5,.75) node[scale=.7]{5};
\draw (1.5,.75) node[scale=.7]{1};
\draw (2.5,.75) node[scale=.7]{4};
\draw (3.5,.75) node[scale=.7]{0};

\draw (.5,2.25) node[scale=.7]{4};
\draw (1.5,2.25) node[scale=.7]{5};

\draw (.5,2.75) node[scale=.7]{3};
\draw (1.5,2.75) node[scale=.7]{2};

\draw (2,-1) node{2.5};
\end{scope}

\begin{scope}[shift={(16,-12)}]

\draw (0,0) rectangle (3,1);
\draw (0,2) rectangle (3,3); 

\draw (0,0) node{$\bullet$};
\draw (1,0) node{$\times$};
\draw (2,0) node{$\times$};
\draw (3,0) node{$\bullet$}; 

\draw (0,1) node{$\bullet$};
\draw (1,1) node{$\times$};
\draw (2,1) node{$\times$};
\draw (3,1) node{$\bullet$};

\draw (0,2) node{$\times$};
\draw (1,2) node{$\bullet$};
\draw (2,2) node{$\bullet$};
\draw (3,2) node{$\times$};

\draw (0,3) node{$\times$};
\draw (1,3) node{$\bullet$};
\draw (2,3) node{$\bullet$};
\draw (3,3) node{$\times$};

\draw (.5,.25) node[scale=.7]{0};
\draw (1.5,.25) node[scale=.7]{3};
\draw (2.5,.25) node[scale=.7]{4};

\draw (.5,.75) node[scale=.7]{5};
\draw (1.5,.75) node[scale=.7]{3};
\draw (2.5,.75) node[scale=.7]{1};

\draw (.5,2.25) node[scale=.7]{1};
\draw (1.5,2.25) node[scale=.7]{2};
\draw (2.5,2.25) node[scale=.7]{5};

\draw (.5,2.75) node[scale=.7]{4};
\draw (1.5,2.75) node[scale=.7]{2};
\draw (2.5,2.75) node[scale=.7]{0};

\draw (1.5,-1) node{2.6};
\end{scope}

\begin{scope}[shift={(0,-18)}]

\draw (0,0) rectangle (5,1);
\draw (0,2) rectangle (1,3);

\draw (0,0) node{$\bullet$};
\draw (1,0) node{$\bullet$};
\draw (2,0) node{$\times$};
\draw (3,0) node{$\times$};
\draw (4,0) node{$\bullet$};
\draw (5,0) node{$\bullet$};

\draw (0,1) node{$\bullet$};
\draw (1,1) node{$\bullet$};
\draw (2,1) node{$\times$};
\draw (3,1) node{$\times$};
\draw (4,1) node{$\bullet$};
\draw (5,1) node{$\bullet$};

\draw (0,2) node{$\times$};
\draw (1,2) node{$\times$};
\draw (0,3) node{$\times$};
\draw (1,3) node{$\times$};

\draw (.5,.25) node[scale=.7]{0};
\draw (1.5,.25) node[scale=.7]{3};
\draw (2.5,.25) node[scale=.7]{4};
\draw (3.5,.25) node[scale=.7]{2};
\draw (4.5,.25) node[scale=.7]{1};

\draw (.5,.75) node[scale=.7]{1};
\draw (1.5,.75) node[scale=.7]{3};
\draw (2.5,.75) node[scale=.7]{5};
\draw (3.5,.75) node[scale=.7]{2};
\draw (4.5,.75) node[scale=.7]{0};

\draw (.5,2.25) node[scale=.7]{5};
\draw (.5,2.75) node[scale=.7]{4};

\draw (2.5,-1) node{2.7};
\end{scope}

\begin{scope}[shift={(8,-18)}]

\draw (0,0) rectangle (4,1);
\draw (0,2) rectangle (5,3);

\draw (0,0) node{$\bullet$};
\draw (1,0) node{$\bullet$};
\draw (2,0) node{$\times$}; 
\draw (3,0) node{$\bullet$};
\draw (4,0) node{$\bullet$};

\draw (0,1) node{$\times$};
\draw (4,1) node{$\times$}; 

\draw (0,2) node{$\times$};
\draw (1,2) node{$\times$}; 
\draw (5,2) node{$\times$};

\draw (0,3) node{$\bullet$};
\draw (1,3) node{$\bullet$};
\draw (2,3) node{$\times$};
\draw (3,3) node{$\times$}; 
\draw (4,3) node{$\bullet$};
\draw (5,3) node{$\bullet$};

\draw (.5,.25) node[scale=.7]{0};
\draw (1.5,.25) node[scale=.7]{4};
\draw (2.5,.25) node[scale=.7]{5};
\draw (3.5,.25) node[scale=.7]{2};

\draw (2,.75) node[scale=.7]{3};

\draw (.5,2.25) node[scale=.7]{1};
\draw (3,2.25) node[scale=.7]{3};

\draw (.5,2.75) node[scale=.7]{2};
\draw (1.5,2.75) node[scale=.7]{4};
\draw (2.5,2.75) node[scale=.7]{1};
\draw (3.5,2.75) node[scale=.7]{5};
\draw (4.5,2.75) node[scale=.7]{0};

\draw (2,-1) node{2.8};
\end{scope}

\begin{scope}[shift={(16,-18)}]

\draw (0,0) rectangle (3,1);
\draw (0,2) rectangle (3,3);

\draw (0,0) node{$\bullet$};
\draw (1,0) node{$\bullet$};
\draw (2,0) node{$\bullet$};
\draw (3,0) node{$\bullet$};

\draw (0,1) node{$\times$};
\draw (1,1) node{$\times$};
\draw (2,1) node{$\times$};
\draw (3,1) node{$\times$};

\draw (0,2) node{$\times$};
\draw (1,2) node{$\times$};
\draw (2,2) node{$\times$};
\draw (3,2) node{$\times$};

\draw (0,3) node{$\bullet$};
\draw (1,3) node{$\bullet$};
\draw (2,3) node{$\bullet$};
\draw (3,3) node{$\bullet$};

\draw (.5,.25) node[scale=.7]{0};
\draw (1.5,.25) node[scale=.7]{5};
\draw (2.5,.25) node[scale=.7]{1};

\draw (.5,.75) node[scale=.7]{3};
\draw (1.5,.75) node[scale=.7]{4};
\draw (2.5,.75) node[scale=.7]{2};

\draw (.5,2.25) node[scale=.7]{2};
\draw (1.5,2.25) node[scale=.7]{4};
\draw (2.5,2.25) node[scale=.7]{3};

\draw (.5,2.75) node[scale=.7]{1};
\draw (1.5,2.75) node[scale=.7]{5};
\draw (2.5,2.75) node[scale=.7]{0};

\draw (1.5,-1) node{2.9};
 
\end{scope}

\begin{scope}[shift={(0,-26)}]

\draw (0,0) rectangle (4,1);
\draw (0,2) rectangle (3,3);
\draw (0,4) rectangle (1,5);

\draw (0,0) node{$\bullet$};
\draw (1,0) node{$\bullet$};
\draw (4,0) node{$\bullet$};

\draw (0,1) node{$\bullet$};
\draw (1,1) node{$\times$};
\draw (2,1) node{$\times$};
\draw (3,1) node{$\bullet$};
\draw (4,1) node{$\bullet$};

\draw (0,2) node{$\times$};
\draw (1,2) node{$\times$};
\draw (2,2) node{$\bullet$};
\draw (3,2) node{$\times$};

\draw (0,3) node{$\bullet$};
\draw (3,3) node{$\bullet$};

\draw (0,4) node{$\times$};
\draw (1,4) node{$\times$}; 

\draw (0,5) node{$\times$};
\draw (1,5) node{$\times$};

\draw (.5,.25) node[scale=.7]{0};
\draw (2.5,.25) node[scale=.7]{1};

\draw (.5,.75) node[scale=.7]{5};
\draw (1.5,.75) node[scale=.7]{4};
\draw (2.5,.75) node[scale=.7]{3};
\draw (3.5,.75) node[scale=.7]{0};

\draw (.5,2.25) node[scale=.7]{2};
\draw (1.5,2.25) node[scale=.7]{3};
\draw (2.5,2.25) node[scale=.7]{5};

\draw (1.5,2.75) node[scale=.7]{1};

\draw (.5,4.25) node[scale=.7]{4};

\draw (.5,4.75) node[scale=.7]{2};

\draw (2,-1) node{3.1};
\end{scope}

\begin{scope}[shift={(8,-26)}]

\draw (0,0) rectangle (4,1);
\draw (0,2) rectangle (2,3);
\draw (0,4) rectangle (3,5); 

\draw (0,0) node{$\bullet$};
\draw (1,0) node{$\bullet$};
\draw (4,0) node{$\bullet$};

\draw (0,1) node{$\bullet$};
\draw (1,1) node{$\times$};
\draw (2,1) node{$\times$}; 
\draw (3,1) node{$\bullet$};
\draw (4,1) node{$\bullet$};

\draw (0,2) node{$\times$};
\draw (1,2) node{$\bullet$};
\draw (2,2) node{$\times$};

\draw (0,3) node{$\times$};
\draw (2,3) node{$\times$}; 

\draw (0,4) node{$\times$};
\draw (2,4) node{$\times$};
\draw (3,4) node{$\times$}; 

\draw (0,5) node{$\bullet$};
\draw (3,5) node{$\bullet$};

\draw (.5,.25) node[scale=.7]{0};
\draw (2.5,.25) node[scale=.7]{1};

\draw (.5,.75) node[scale=.7]{5};
\draw (1.5,.75) node[scale=.7]{4};
\draw (2.5,.75) node[scale=.7]{2};
\draw (3.5,.75) node[scale=.7]{0};

\draw (.5,2.25) node[scale=.7]{2};
\draw (1.5,2.25) node[scale=.7]{5};

\draw (1,2.75) node[scale=.7]{3};

\draw (1,4.25) node[scale=.7]{3};
\draw (2.5,4.25) node[scale=.7]{4};

\draw (1.5,4.75) node[scale=.7]{1};

\draw (2,-1) node{3.2};
\end{scope}

\begin{scope}[shift={(16,-26)}]

\draw (0,0) rectangle (4,1); 
\draw (0,2) rectangle (2,3); 
\draw (0,4) rectangle (1,5);

\draw (0,0) node{$\times$};
\draw (1,0) node{$\bullet$};
\draw (2,0) node{$\times$};
\draw (4,0) node{$\times$};

\draw (0,1) node{$\bullet$};
\draw (1,1) node{$\bullet$};
\draw (2,1) node{$\times$};
\draw (3,1) node{$\times$};
\draw (4,1) node{$\bullet$};

\draw (0,2) node{$\bullet$};
\draw (1,2) node{$\bullet$};
\draw (2,2) node{$\bullet$};

\draw (0,3) node{$\times$};
\draw (2,3) node{$\times$};

\draw (0,4) node{$\times$};
\draw (1,4) node{$\times$};

\draw (0,5) node{$\bullet$};
\draw (1,5) node{$\bullet$};

\draw (.5,.25) node[scale=.7]{0};
\draw (1.5,.25) node[scale=.7]{1};
\draw (3,.25) node[scale=.7]{2};

\draw (.5,.75) node[scale=.7]{5};
\draw (1.5,.75) node[scale=.7]{1};
\draw (2.5,.75) node[scale=.7]{4};
\draw (3.5,.75) node[scale=.7]{0};

\draw (.5,2.25) node[scale=.7]{3};
\draw (1.5,2.25) node[scale=.7]{5};

\draw (1,2.75) node[scale=.7]{2};

\draw (.5,4.25) node[scale=.7]{4};
\draw (.5,4.75) node[scale=.7]{3};
\draw (2,-1) node{3.3};

\end{scope}

\end{tikzpicture}

\begin{tikzpicture}[scale=.7]

\begin{scope}[shift={(0,0)}]

\draw (0,0) rectangle (4,1); 
\draw (0,2) rectangle (1,3);
\draw (0,4) rectangle (1,5);

\draw (0,0) node{$\bullet$};
\draw (1,0) node{$\times$};
\draw (2,0) node{$\bullet$};
\draw (3,0) node{$\bullet$};
\draw (4,0) node{$\bullet$};

\draw (0,1) node{$\times$};
\draw (1,1) node{$\times$};
\draw (2,1) node{$\times$};
\draw (3,1) node{$\bullet$};
\draw (4,1) node{$\times$};

\draw (0,2) node{$\times$}; 
\draw (1,2) node{$\times$}; 

\draw (0,3) node{$\bullet$};
\draw (1,3) node{$\bullet$};

\draw (0,4) node{$\times$}; 
\draw (1,4) node{$\times$}; 

\draw (0,5) node{$\bullet$};
\draw (1,5) node{$\bullet$};

\draw (.5,.25) node[scale=.7]{0};
\draw (1.5,.25) node[scale=.7]{1};
\draw (2.5,.25) node[scale=.7]{3};
\draw (3.5,.25) node[scale=.7]{2};

\draw (.5,.75) node[scale=.7]{5};
\draw (1.5,.75) node[scale=.7]{4};
\draw (2.5,.75) node[scale=.7]{1};
\draw (3.5,.75) node[scale=.7]{0};

\draw (.5,2.25) node[scale=.7]{4};
\draw (.5,2.75) node[scale=.7]{2};
\draw (.5,4.25) node[scale=.7]{5};
\draw (.5,4.75) node[scale=.7]{3};

\draw (2,-1) node{3.4};
\end{scope}

\begin{scope}[shift={(8,0)}]

\draw (0,0) rectangle (3,1);
\draw (0,2) rectangle (4,3);
\draw (0,4) rectangle (3,5);

\draw (0,0) node{$\times$};
\draw (1,0) node{$\times$};
\draw (2,0) node{$\times$};
\draw (3,0) node{$\times$};

\draw (0,1) node{$\bullet$};
\draw (3,1) node{$\bullet$};

\draw (0,2) node{$\bullet$};
\draw (1,2) node{$\bullet$};
\draw (4,2) node{$\bullet$};

\draw (0,3) node{$\bullet$};
\draw (3,3) node{$\bullet$};
\draw (4,3) node{$\bullet$};

\draw (0,4) node{$\bullet$};
\draw (3,4) node{$\bullet$};

\draw (0,5) node{$\times$};
\draw (1,5) node{$\times$};
\draw (2,5) node{$\times$};
\draw (3,5) node{$\times$};

\draw (.5,.25) node[scale=.7]{0};
\draw (1.5,.25) node[scale=.7]{2};
\draw (2.5,.25) node[scale=.7]{1};

\draw (1.5,.75) node[scale=.7]{4};

\draw (.5,2.25) node[scale=.7]{3};
\draw (2.5,2.25) node[scale=.7]{4};

\draw (1.5,2.75) node[scale=.7]{5};
\draw (3.5,2.75) node[scale=.7]{3};

\draw (1.5,4.25) node[scale=.7]{5};

\draw (.5,4.75) node[scale=.7]{1};
\draw (1.5,4.75) node[scale=.7]{2};
\draw (2.5,4.75) node[scale=.7]{0};

\draw (1.5,-1) node{3.5};

\end{scope}

\begin{scope}[shift={(16,0)}]

\draw (0,0) rectangle (3,1);
\draw (0,2) rectangle (3,3);
\draw (0,4) rectangle (1,5);

\draw (0,0) node{$\bullet$};
\draw (1,0) node{$\bullet$};
\draw (2,0) node{$\bullet$};
\draw (3,0) node{$\bullet$};

\draw (0,1) node{$\times$}; 
\draw (2,1) node{$\times$}; 
\draw (3,1) node{$\times$}; 

\draw (0,2) node{$\times$}; 
\draw (1,2) node{$\times$}; 
\draw (3,2) node{$\times$}; 

\draw (0,3) node{$\bullet$};
\draw (1,3) node{$\bullet$};
\draw (2,3) node{$\bullet$};
\draw (3,3) node{$\bullet$};

\draw (0,4) node{$\times$};
\draw (1,4) node{$\times$};

\draw (0,5) node{$\times$};
\draw (1,5) node{$\times$};

\draw (.5,.25) node[scale=.7]{0};
\draw (1.5,.25) node[scale=.7]{2};
\draw (2.5,.25) node[scale=.7]{1};

\draw (1,.75) node[scale=.7]{5};
\draw (2.5,.75) node[scale=.7]{4};

\draw (.5,2.25) node[scale=.7]{3};
\draw (2,2.25) node[scale=.7]{5};

\draw (.5,2.75) node[scale=.7]{1};
\draw (1.5,2.75) node[scale=.7]{2};
\draw (2.5,2.75) node[scale=.7]{0};

\draw (.5,4.25) node[scale=.7]{4};
\draw (.5,4.75) node[scale=.7]{3};

\draw (1.5,-1) node{3.6};

\end{scope}

\begin{scope}[shift={(0,-10)}]

\draw (0,0) rectangle (3,1);
\draw (0,2) rectangle (3,3);
\draw (0,4) rectangle (2,5);

\draw (0,0) node{$\bullet$};
\draw (1,0) node{$\times$};
\draw (2,0) node{$\times$};
\draw (3,0) node{$\bullet$};

\draw (0,1) node{$\times$};
\draw (2,1) node{$\times$};
\draw (3,1) node{$\times$};

\draw (0,2) node{$\bullet$};
\draw (1,2) node{$\bullet$};
\draw (3,2) node{$\bullet$};

\draw (0,3) node{$\times$};
\draw (1,3) node{$\bullet$};
\draw (2,3) node{$\bullet$};
\draw (3,3) node{$\times$};

\draw (0,4) node{$\times$};
\draw (2,4) node{$\times$};

\draw (0,5) node{$\bullet$};
\draw (2,5) node{$\bullet$};

\draw (.5,.25) node[scale=.7]{0};
\draw (1.5,.25) node[scale=.7]{2};
\draw (2.5,.25) node[scale=.7]{3};

\draw (1,.75) node[scale=.7]{5};
\draw (2.5,.75) node[scale=.7]{2};

\draw (.5,2.25) node[scale=.7]{1};
\draw (2,2.25) node[scale=.7]{4};

\draw (.5,2.75) node[scale=.7]{3};
\draw (1.5,2.75) node[scale=.7]{1};
\draw (2.5,2.75) node[scale=.7]{0};

\draw (1,4.25) node[scale=.7]{5};
\draw (1,4.75) node[scale=.7]{4};

\draw (1.5,-1) node{3.7};

\end{scope}

\begin{scope}[shift={(8,-10)}]

\draw (0,0) rectangle (3,1);
\draw (0,2) rectangle (3,3);
\draw (0,4) rectangle (1,5);

\draw (0,0) node{$\bullet$};
\draw (1,0) node{$\times$};
\draw (3,0) node{$\bullet$};

\draw (0,1) node{$\times$};
\draw (1,1) node{$\times$};
\draw (2,1) node{$\bullet$};
\draw (3,1) node{$\times$};

\draw (0,2) node{$\bullet$};    
\draw (1,2) node{$\times$};
\draw (2,2) node{$\bullet$};
\draw (3,2) node{$\bullet$};

\draw (0,3) node{$\times$};
\draw (2,3) node{$\bullet$};
\draw (3,3) node{$\times$};

\draw (0,4) node{$\times$};
\draw (1,4) node{$\times$};

\draw (0,5) node{$\bullet$};
\draw (1,5) node{$\bullet$};

\draw (.5,.25) node[scale=.7]{0};
\draw (2,.25) node[scale=.7]{3};

\draw (.5,.75) node[scale=.7]{5};
\draw (1.5,.75) node[scale=.7]{4};
\draw (2.5,.75) node[scale=.7]{0};

\draw (.5,2.25) node[scale=.7]{1};
\draw (1.5,2.25) node[scale=.7]{4};
\draw (2.5,2.25) node[scale=.7]{2};

\draw (1,2.75) node[scale=.7]{3};
\draw (2.5,2.75) node[scale=.7]{1};

\draw (.5,4.25) node[scale=.7]{5};
\draw (.5,4.75) node[scale=.7]{2};

\draw (1.5,-1) node{3.8};

\end{scope}

\begin{scope}[shift={(16,-10)}]

\draw (0,0) rectangle (4,1);
\draw (0,2) rectangle (2,3);
\draw (0,4) rectangle (2,5);

\draw (0,0) node{$\bullet$};
\draw (1,0) node{$\times$};
\draw (2,0) node{$\bullet$};
\draw (4,0) node{$\bullet$};

\draw (0,1) node{$\times$};
\draw (2,1) node{$\times$};
\draw (3,1) node{$\bullet$};
\draw (4,1) node{$\times$};

\draw (0,2) node{$\bullet$};
\draw (1,2) node{$\times$};
\draw (2,2) node{$\bullet$};

\draw (0,3) node{$\bullet$};
\draw (2,3) node{$\bullet$};

\draw (0,4) node{$\times$};
\draw (2,4) node{$\times$};

\draw (0,5) node{$\times$}; 
\draw (1,5) node{$\bullet$};
\draw (2,5) node{$\times$}; 

\draw (.5,.25) node[scale=.7]{0};
\draw (1.5,.25) node[scale=.7]{3};
\draw (3,.25) node[scale=.7]{1};

\draw (1,.75) node[scale=.7]{5};
\draw (2.5,.75) node[scale=.7]{4};
\draw (3.5,.75) node[scale=.7]{0};

\draw (.5,2.25) node[scale=.7]{2};
\draw (1.5,2.25) node[scale=.7]{4};

\draw (1,2.75) node[scale=.7]{1};

\draw (1,4.25) node[scale=.7]{5};

\draw (.5,4.75) node[scale=.7]{3};
\draw (1.5,4.75) node[scale=.7]{2};
 
\draw (2,-1) node{3.9};

\end{scope}

\begin{scope}[shift={(0,-20)}]

\draw (0,0) rectangle (3,1);
\draw (0,2) rectangle (4,3);
\draw (0,4) rectangle (3,5);

\draw (0,0) node{$\bullet$};
\draw (1,0) node{$\times$};
\draw (2,0) node{$\times$}; 
\draw (3,0) node{$\bullet$};

\draw (0,1) node{$\bullet$};
\draw (3,1) node{$\bullet$};

\draw (0,2) node{$\bullet$};
\draw (1,2) node{$\bullet$};
\draw (4,2) node{$\bullet$};

\draw (0,3) node{$\times$};
\draw (3,3) node{$\times$};
\draw (4,3) node{$\times$};

\draw (0,4) node{$\times$};
\draw (3,4) node{$\times$};

\draw (0,5) node{$\times$};
\draw (1,5) node{$\bullet$};
\draw (2,5) node{$\bullet$};
\draw (3,5) node{$\times$};

\draw (.5,.25) node[scale=.7]{0};
\draw (1.5,.25) node[scale=.7]{3};
\draw (2.5,.25) node[scale=.7]{1};

\draw (1.5,.75) node[scale=.7]{5};

\draw (.5,2.25) node[scale=.7]{2};
\draw (2.5,2.25) node[scale=.7]{5};

\draw (1.5,2.75) node[scale=.7]{4};
\draw (3.5,2.75) node[scale=.7]{3};

\draw (1.5,4.25) node[scale=.7]{4};

\draw (.5,4.75) node[scale=.7]{1};
\draw (1.5,4.75) node[scale=.7]{2};
\draw (2.5,4.75) node[scale=.7]{0};

\draw (1.5,-1) node{3.10};

\end{scope}

\begin{scope}[shift={(8,-20)}]

\draw (0,0) rectangle (4,1);
\draw (0,2) rectangle (1,3);
\draw (0,4) rectangle (1,5);

\draw (0,0) node{$\bullet$};
\draw (1,0) node{$\times$};
\draw (2,0) node{$\times$};
\draw (3,0) node{$\bullet$};
\draw (4,0) node{$\bullet$};

\draw (0,1) node{$\times$};
\draw (1,1) node{$\times$};
\draw (2,1) node{$\bullet$};
\draw (3,1) node{$\bullet$};
\draw (1,1) node{$\times$};

\draw (0,2) node{$\bullet$};
\draw (1,2) node{$\bullet$};

\draw (0,3) node{$\bullet$};
\draw (1,3) node{$\bullet$};

\draw (0,4) node{$\times$};
\draw (1,4) node{$\times$};

\draw (0,5) node{$\times$};
\draw (1,5) node{$\times$};

\draw (.5,.25) node[scale=.7]{0};
\draw (1.5,.25) node[scale=.7]{3};
\draw (2.5,.25) node[scale=.7]{1};
\draw (3.5,.25) node[scale=.7]{2};

\draw (.5,.75) node[scale=.7]{5};
\draw (1.5,.75) node[scale=.7]{1};
\draw (2.5,.75) node[scale=.7]{4};
\draw (3.5,.75) node[scale=.7]{0};

\draw (.5,2.25) node[scale=.7]{4};
\draw (.5,2.75) node[scale=.7]{2};

\draw (.5,4.25) node[scale=.7]{5};
\draw (.5,4.75) node[scale=.7]{3};

\draw (2,-1) node{3.11};

\end{scope}

\begin{scope}[shift={(16,-20)}]

\draw (0,0) rectangle (2,1);
\draw (0,2) rectangle (2,3);
\draw (0,4) rectangle (2,5);

\draw (0,0) node{$\bullet$};
\draw (1,0) node{$\times$};
\draw (2,0) node{$\bullet$};

\draw (0,1) node{$\times$};
\draw (1,1) node{$\bullet$};
\draw (2,1) node{$\times$};

\draw (0,2) node{$\bullet$};
\draw (1,2) node{$\times$};
\draw (2,2) node{$\bullet$};

\draw (0,3) node{$\times$};
\draw (1,3) node{$\bullet$};
\draw (2,3) node{$\times$};

\draw (0,4) node{$\times$};
\draw (1,4) node{$\bullet$};
\draw (2,4) node{$\times$};

\draw (0,5) node{$\times$};
\draw (1,5) node{$\bullet$};
\draw (2,5) node{$\times$};

\draw (.5,.25) node[scale=.7]{0};
\draw (1.5,.25) node[scale=.7]{5};

\draw (.5,.75) node[scale=.7]{4};
\draw (1.5,.75) node[scale=.7]{3};

\draw (.5,2.25) node[scale=.7]{1};
\draw (1.5,2.25) node[scale=.7]{4};

\draw (.5,2.75) node[scale=.7]{2};
\draw (1.5,2.75) node[scale=.7]{0};

\draw (.5,4.25) node[scale=.7]{2};
\draw (1.5,4.25) node[scale=.7]{3};

\draw (.5,4.75) node[scale=.7]{5};
\draw (1.5,4.75) node[scale=.7]{1};

\draw (1,-1) node{3.12};

\end{scope}

\end{tikzpicture}

\begin{tikzpicture}[scale=.7]

\begin{scope}[shift={(0,0)}]

\draw (0,0) rectangle (2,1);
\draw (0,2) rectangle (2,3);
\draw (0,4) rectangle (2,5);

\draw (0,0) node{$\bullet$};
\draw (1,0) node{$\times$};
\draw (2,0) node{$\bullet$};

\draw (0,1) node{$\bullet$};
\draw (1,1) node{$\times$};
\draw (2,1) node{$\bullet$};

\draw (0,2) node{$\bullet$};
\draw (1,2) node{$\times$};
\draw (2,2) node{$\bullet$};

\draw (0,3) node{$\bullet$};
\draw (1,3) node{$\times$};
\draw (2,3) node{$\bullet$};

\draw (0,4) node{$\bullet$};
\draw (1,4) node{$\times$};
\draw (2,4) node{$\bullet$};

\draw (0,5) node{$\times$};
\draw (1,5) node{$\bullet$};
\draw (2,5) node{$\times$};

\draw (.5,.25) node[scale=.7]{0};
\draw (1.5,.25) node[scale=.7]{5};

\draw (.5,.75) node[scale=.7]{0};
\draw (1.5,.75) node[scale=.7]{4};

\draw (.5,2.25) node[scale=.7]{1};
\draw (1.5,2.25) node[scale=.7]{4};

\draw (.5,2.75) node[scale=.7]{1};
\draw (1.5,2.75) node[scale=.7]{3};

\draw (.5,4.25) node[scale=.7]{2};
\draw (1.5,4.25) node[scale=.7]{3};

\draw (.5,4.75) node[scale=.7]{5};
\draw (1.5,4.75) node[scale=.7]{2};

\draw (1,-1) node{3.13};

\end{scope}

\begin{scope}[shift={(8,0)}]

\draw (0,0) rectangle (3,1);
\draw (0,2) rectangle (1,3);
\draw (0,4) rectangle (2,5);
\draw (0,6) rectangle (2,7);

\draw (0,0) node{$\bullet$};
\draw (1,0) node{$\bullet$};
\draw (3,0) node{$\bullet$};

\draw (0,1) node{$\bullet$};
\draw (2,1) node{$\bullet$};
\draw (3,1) node{$\bullet$};

\draw (0,2) node{$\times$};
\draw (1,2) node{$\times$};

\draw (0,3) node{$\times$};
\draw (1,3) node{$\times$};

\draw (0,4) node{$\times$};
\draw (1,4) node{$\times$};
\draw (2,4) node{$\times$};

\draw (0,5) node{$\bullet$};
\draw (2,5) node{$\bullet$};

\draw (0,6) node{$\bullet$};
\draw (2,6) node{$\bullet$};

\draw (0,7) node{$\times$};
\draw (1,7) node{$\times$};
\draw (2,7) node{$\times$};

\draw (.5,.25) node[scale=.7]{0};
\draw (2,.25) node[scale=.7]{1};

\draw (1,.75) node[scale=.7]{5};
\draw (2.5,.75) node[scale=.7]{0};

\draw (.5,2.25) node[scale=.7]{2};
\draw (.5,2.75) node[scale=.7]{4};

\draw (.5,4.25) node[scale=.7]{3};
\draw (1.5,4.25) node[scale=.7]{4};

\draw (1,4.75) node[scale=.7]{1};

\draw (1,6.25) node[scale=.7]{5};

\draw (.5,6.75) node[scale=.7]{3};
\draw (1.5,6.75) node[scale=.7]{2};

\draw (2,-1) node{4.1};

\end{scope}

\begin{scope}[shift={(16,0)}]

\draw (0,0) rectangle (3,1);
\draw (0,2) rectangle (1,3);
\draw (0,4) rectangle (1,5);
\draw (0,6) rectangle (1,7);

\draw (0,0) node{$\bullet$}; 
\draw (1,0) node{$\bullet$}; 
\draw (2,0) node{$\bullet$}; 
\draw (3,0) node{$\bullet$}; 

\draw (0,1) node{$\times$}; 
\draw (1,1) node{$\times$}; 
\draw (2,1) node{$\times$}; 
\draw (3,1) node{$\times$}; 

\draw (0,2) node{$\times$};
\draw (1,2) node{$\times$};

\draw (0,3) node{$\bullet$};
\draw (1,3) node{$\bullet$};

\draw (0,4) node{$\times$};
\draw (1,4) node{$\times$};

\draw (0,5) node{$\bullet$};
\draw (1,5) node{$\bullet$};

\draw (0,6) node{$\times$};
\draw (1,6) node{$\times$};

\draw (0,7) node{$\bullet$};
\draw (1,7) node{$\bullet$};

\draw (.5,.25) node[scale=.7]{0};
\draw (1.5,.25) node[scale=.7]{2};
\draw (2.5,.25) node[scale=.7]{1};

\draw (.5,.75) node[scale=.7]{5};
\draw (1.5,.75) node[scale=.7]{4};
\draw (2.5,.75) node[scale=.7]{3};

\draw (.5,2.25) node[scale=.7]{3};
\draw (.5,2.75) node[scale=.7]{1};

\draw (.5,4.25) node[scale=.7]{4};
\draw (.5,4.75) node[scale=.7]{2};

\draw (.5,6.25) node[scale=.7]{5};
\draw (.5,6.75) node[scale=.7]{0};

\draw (1.5,-1) node{4.2};

\end{scope}

\begin{scope}[shift={(0,-10)}]

\draw (0,0) rectangle (2,1);
\draw (0,2) rectangle (1,3);
\draw (0,4) rectangle (3,5);
\draw (0,6) rectangle (2,7);

\draw (0,0) node{$\bullet$};
\draw (1,0) node{$\bullet$};
\draw (2,0) node{$\bullet$};

\draw (0,1) node{$\times$};
\draw (2,1) node{$\times$};

\draw (0,2) node{$\times$};
\draw (1,2) node{$\times$};

\draw (0,3) node{$\bullet$};
\draw (1,3) node{$\bullet$};

\draw (0,4) node{$\times$};
\draw (1,4) node{$\times$};
\draw (3,4) node{$\times$};

\draw (0,5) node{$\bullet$};
\draw (2,5) node{$\bullet$};
\draw (3,5) node{$\bullet$};

\draw (0,6) node{$\bullet$};
\draw (2,6) node{$\bullet$};

\draw (0,7) node{$\times$};
\draw (1,7) node{$\times$};
\draw (2,7) node{$\times$};

\draw (.5,.25) node[scale=.7]{0};
\draw (1.5,.25) node[scale=.7]{3};

\draw (1,.75) node[scale=.7]{5};

\draw (.5,2.25) node[scale=.7]{1};
\draw (.5,2.75) node[scale=.7]{0};

\draw (.5,4.25) node[scale=.7]{2};
\draw (2,4.25) node[scale=.7]{5};

\draw (1,4.75) node[scale=.7]{4};
\draw (2.5,4.75) node[scale=.7]{3};

\draw (1,6.25) node[scale=.7]{4};

\draw (.5,6.75) node[scale=.7]{2};
\draw (1.5,6.75) node[scale=.7]{1};
 
\draw (1,-1) node{4.3};

\end{scope}

\begin{scope}[shift={(8,-10)}]

\draw (0,0) rectangle (2,1);
\draw (0,2) rectangle (2,3);
\draw (0,4) rectangle (1,5);
\draw (0,6) rectangle (1,7);

\draw (0,0) node{$\bullet$};
\draw (1,0) node{$\bullet$};
\draw (2,0) node{$\bullet$};

\draw (0,1) node{$\bullet$};
\draw (1,1) node{$\bullet$};
\draw (2,1) node{$\bullet$};

\draw (0,2) node{$\times$};
\draw (1,2) node{$\times$};
\draw (2,2) node{$\times$};

\draw (0,3) node{$\times$};
\draw (1,3) node{$\times$};
\draw (2,3) node{$\times$};

\draw (0,4) node{$\times$};
\draw (1,4) node{$\times$};

\draw (0,5) node{$\bullet$};
\draw (1,5) node{$\bullet$};

\draw (0,6) node{$\bullet$};
\draw (1,6) node{$\bullet$};

\draw (0,7) node{$\times$};
\draw (1,7) node{$\times$};

\draw (.5,.25) node[scale=.7]{0};
\draw (1.5,.25) node[scale=.7]{3};

\draw (.5,.75) node[scale=.7]{5};
\draw (1.5,.75) node[scale=.7]{0};

\draw (.5,2.25) node[scale=.7]{1};
\draw (1.5,2.25) node[scale=.7]{2};

\draw (.5,2.75) node[scale=.7]{1};
\draw (1.5,2.75) node[scale=.7]{4};

\draw (.5,4.25) node[scale=.7]{4};
\draw (.5,4.75) node[scale=.7]{3};

\draw (.5,6.25) node[scale=.7]{5};
\draw (.5,6.75) node[scale=.7]{2};

\draw (1,-1) node{4.4};

\end{scope}

\begin{scope}[shift={(16,-10)}]

\draw (0,0) rectangle (2,1);
\draw (0,2) rectangle (2,3);
\draw (0,4) rectangle (1,5);
\draw (0,6) rectangle (1,7);

\draw (0,0) node{$\bullet$};
\draw (1,0) node{$\bullet$};
\draw (2,0) node{$\bullet$};

\draw (0,1) node{$\times$};
\draw (1,1) node{$\times$};
\draw (2,1) node{$\times$};

\draw (0,2) node{$\times$};
\draw (1,2) node{$\times$};
\draw (2,2) node{$\times$};

\draw (0,3) node{$\bullet$};
\draw (1,3) node{$\bullet$};
\draw (2,3) node{$\bullet$};

\draw (0,4) node{$\bullet$};
\draw (1,4) node{$\bullet$};

\draw (0,5) node{$\bullet$};
\draw (1,5) node{$\bullet$};

\draw (0,6) node{$\times$};
\draw (1,6) node{$\times$};

\draw (0,7) node{$\times$};
\draw (1,7) node{$\times$};

\draw (.5,.25) node[scale=.7]{0};
\draw (1.5,.25) node[scale=.7]{5};

\draw (.5,.75) node[scale=.7]{4};
\draw (1.5,.75) node[scale=.7]{3};

\draw (.5,2.25) node[scale=.7]{1};
\draw (1.5,2.25) node[scale=.7]{4};

\draw (.5,2.75) node[scale=.7]{5};
\draw (1.5,2.75) node[scale=.7]{2};

\draw (.5,4.25) node[scale=.7]{2};
\draw (.5,4.75) node[scale=.7]{0};

\draw (.5,6.25) node[scale=.7]{3};
\draw (.5,6.75) node[scale=.7]{1};

\draw (1,-1) node{4.5};

\end{scope}

\end{tikzpicture}
\end{center} 

\end{document}